\newtheorem{theorem}{Theorem}
\newtheorem{example}{Example}
\newtheorem{lemma}{Lemma}
\numberwithin{equation}{section}
\numberwithin{lemma}{section}
\numberwithin{theorem}{section}
\numberwithin{corollary}{section}
\begin{document}
	\title{A note on the Lauricella matrix functions }	\author{Ravi Dwivedi\footnote {Department of Mathematics, National Institute of Technology, Kurukshetra, India. \newline E-mail: dwivedir999@gmail.com} 
		\ and Vivek Sahai\footnote {Department of Mathematics and Astronomy Lucknow University Lucknow 226007, India. \newline E-mail: sahai\_vivek@hotmail.com (Corresponding author)} }
\maketitle
\begin{abstract}
In this paper, we study the solutions of system of bilateral type matrix differential equations and presented these solutions in terms of 
Lauricella hypergeometric matrix functions of several variables and Srivastava's triple hypergeometric matrix functions of three variables. We also discuss the region of convergence and integral representation for these matrix functions. 

 \medskip
\noindent\textbf{Keywords}: Matrix differential equations, Matrix functional calculus, Multivariable hypergeometric functions.

\medskip
\noindent\textbf{AMS Subject Classification}: 15A15, 33C65, 33E30 
\end{abstract}
\section{Introduction} 
 Jod\'ar and Cort\'es \cite{jc00}, introduced the concept of a fundamental set of solutions for matrix  differential equations of the type
 \begin{align}
 X'' = f_1(z) X' + f_2(z) X f_3(z) + X' f_4(z),
 \end{align} 
 where $f_i$ are matrix valued functions of complex variable $z$. A  closed form general solution of such bilateral type matrix differential equation is determined  in terms of Gauss hypergeometric matrix function. 
 In this paper, we give the systems of partial matrix differential equations of bilateral type in the form 
  \begin{align}
  U_{x_ix_i}&= \sum_{ 
  	\substack{m,\,l=1\\ m<l}
  }^{n} \ f_{iml} (x_1, \dots, x_n)\, U_{x_m x_l} + \sum_{j=1}^{n} \  \left(U_{x_j}\, g_{ij} (x_1, \dots, x_n) + h_{ij}(x_1, \dots, x_n) \, U_{x_j}\right) \nonumber\\
& \quad + f(x_1, \dots, x_n) U g(x_1, \dots, x_n), \ 1\le i, j \le n,\label{e1.2}
  \end{align}
  where $U_{x_ix_i} = \frac{\partial^2 U}{\partial x_i^2}$, $U_{x_m x_l} = \frac{\partial^2U}{\partial x_m \partial x_l}$, $U_{x_j} = \frac{\partial U}{\partial x_j}$ and $g_{ij}$, $h_{ij}$, $f$ and $g$ are matrix valued functions of complex variables $x_1,\dots, x_n$.
  
  We show that the Lauricella matrix functions of $n$-variables satisfy the systems of bilateral type matrix differential equation of the form \eqref{e1.2}. The region of convergence and integral representation for Lauricella matrix functions of $n$-variables are also determined.   The particular case $n=3$ leading to fourteen Lauricella matrix functions of three variables has been discussed in detail.
 The sectionwise treatment is as follows:


In Section~2, we give some basic definitions from special matrix function theory that are needed in the sequel. In Section~3, we find the convergence conditions 
and system of partial matrix differential equations of bilateral type satisfied by the generalized ($n$-variable) Lauricella matrix functions. The region of convergence and integral representation satisfy by the generalized ($n$-variable) Lauricella matrix functions are also given here. In Section~4, we give the complete list of fourteen Lauricella matrix functions of three variables and matrix analogue of Srivastava triple hypergeometric  functions. Their regions of convergence, integral representations
 and the system of matrix differential equations of bilateral type satisfied by them are also given. 
 Our notations are standard.  For details, see \cite{ds1,ds4,jjc98a,jjc98b, jc00}.
 
\section{Preliminaries} 
Let the spectrum of a matrix $A$ in $\mathbb{C}^{r\times r}$, denoted by $\sigma(A)$, be the set of all eigenvalues of $A$ and let $\alpha(A) = \max \{\,\Re(z) \mid z \in \sigma(A)\,\}$ and  $\beta(A) = \min \{\,\Re(z) \mid z \in \sigma(A)\,\}$. Then for a positive stable matrix $A \in \mathbb{C}^{r \times r}$, that is $\beta(A) > 0$, the gamma matrix function is defined as \cite{jjc98a}
\[ \Gamma(A) = \int_{0}^{\infty} e^{-t} \, t^{A-I}\, dt
\]
and the reciprocal gamma matrix function is defined as \cite{jjc98a}
\begin{equation}
\Gamma^{-1}(A)= A(A+I)\dots (A+(n-1)I)\Gamma^{-1}(A+nI) , \  n\geq 1.\label{eq.07}
\end{equation}
The Pochhammer symbol  for  $A\in \mathbb{C}^{r\times r}$ is given by \cite{jjc98b}
\begin{equation}
(A)_n = \begin{cases}
I, & \text{if $n = 0$,}\\
A(A+I) \dots (A+(n-1)I), & \text{if $n\geq 1$}.
\end{cases}\label{c1eq.09}
\end{equation}
This gives
\begin{equation}
(A)_n = \Gamma^{-1}(A) \ \Gamma (A+nI), \qquad n\geq 1.\label{c1eq.010}
\end{equation} 
If $A \in \mathbb{C}^{r\times r}$ is a positive stable matrix  and $n\geq 1$ is an integer, then the  gamma matrix function can also be defined in the form of a limit as \cite{jjc98a}
\begin{equation} 
\Gamma (A) = \lim_{n \to \infty} (n-1)! \, (A)_n^{-1} \, n^A. \label{eq10} 
\end{equation}
If $A$ and $B$ are positive stable matrices in $\mathbb{C}^{r \times r}$, then the beta matrix function is defined as \cite{jjc98a}
\begin{equation}
\mathfrak{B}(A,B) =\int_{0}^{1} t^{A-I} \, (1-t)^{B-I} dt.\label{c1eq11}
\end{equation}
Furthermore, if $AB = BA$, then the beta matrix function can be written in terms of gamma matrix function as \cite{jjc98a} 
\begin{equation}
\mathfrak{B}(A,B) = \Gamma(A)\,\Gamma(B)\,\Gamma^{-1} (A+B).
\end{equation}
Using the Schur decomposition of $A$,  it follows that \cite{gl,vl}
\begin{equation}
\Vert e^{tA}\Vert \leq e^{t\alpha(A)} \sum_{k=0}^{r-1}\frac{(\Vert A\Vert r^{1/2} t)^k}{k!}, \ \ t\geq 0.\label{eq09}
\end{equation}
We shall use the notation $\Gamma \left(\begin{array}{c}
A_1, \dots, A_p \\
B_1, \dots, B_q
\end{array}\right)$ for  $\Gamma (A_1) \cdots \Gamma (A_p) \, \Gamma ^{-1}(B_1) \cdots \Gamma ^{-1} (B_q)$.

\section{Generalized Lauricella matrix functions}
The four Appell matrix functions of two variables  \cite{al,ds5} can be generalized to the following matrix functions of $n$-variables.
\begin{align}
&F_{\mathcal{A}}[A, B_1, \dots, B_n; C_1, \dots, C_n; x_1, \dots, x_n]\nonumber\\
& = \sum_{m_1, \dots, m_n = 0}^{\infty} (A)_{m_1 + \cdots + m_n} (B_1)_{m_1} \cdots (B_n)_{m_n} (C_1)_{m_1}^{-1} \cdots (C_n)_{m_n}^{-1} \, \frac{x_1^{m_1} \cdots x_n^{m_n}}{m_1! \cdots m_n!};\label{eq2.1}
\end{align}
\begin{align}
&F_{\mathcal{B}}[A_1, \dots, A_n, B_1, \dots, B_n; C; x_1, \dots, x_n]\nonumber\\
& = \sum_{m_1, \dots, m_n = 0}^{\infty} (A_1)_{m_1} \cdots (A_n)_{m_n} (B_1)_{m_1} \cdots (B_n)_{m_n} (C)^{-1}_{m_1 + \cdots + m_n}  \, \frac{x_1^{m_1} \cdots x_n^{m_n}}{m_1! \cdots m_n!};\label{eq2.2}
\end{align}
\begin{align}
&F_{\mathcal{C}}[A, B; C_1, \dots, C_n; x_1, \dots, x_n]\nonumber\\
& = \sum_{m_1, \dots, m_n = 0}^{\infty} (A)_{m_1 + \cdots + m_n} (B)_{m_1 + \cdots + m_n} (C_1)_{m_1}^{-1} \cdots (C_n)_{m_n}^{-1} \, \frac{x_1^{m_1} \cdots x_n^{m_n}}{m_1! \cdots m_n!};\label{2.3}
\end{align}
\begin{align}
&F_{\mathcal{D}}[A,  B_1, \dots, B_n; C; x_1, \dots, x_n]\nonumber\\
& = \sum_{m_1, \dots, m_n = 0}^{\infty} (A)_{m_1 + \cdots + m_n} (B_1)_{m_1} \cdots (B_n)_{m_n} (C)^{-1}_{m_1 + \cdots + m_n} \, \frac{x_1^{m_1} \cdots x_n^{m_n}}{m_1! \cdots m_n!},\label{2.4}
\end{align}
where $A$, $A_1$, $\dots$, $A_n$, $B$, $B_1$, $\dots$, $B_n$, $C$, $C_1$, $\dots$, $C_n$  are matrices in $\mathbb{C}^{r\times r}$ such that $C+kI$, $C_i + kI$, $1 \leq i \leq n$ are invertible for all integers $k \geq 0$ and $x_1, \dots, x_n$ are complex variables.

We give the system of partial matrix differential equations of bilateral type satisfied by generalized Lauricella matrix functions under certain conditions. The following theorem gives the system of partial matrix differential equations of bilateral type satisfied by the matrix function $F_{\mathcal{A}}$ defined in \eqref{eq2.1}.
\begin{theorem}\label{3.2.1}
 Let $A, B_i, C_i$ be matrices in $\mathbb{C}^{r \times r}$ such that $B_iB_j = B_jB_i$, $C_iB_j = B_jC_i$ and $C_iC_j = C_jC_i$, for each $i,j=1,\dots,n$. Then the matrix function $F_{\mathcal{A}}$ satisfies the following system of partial matrix differential equations of bilateral type
 \begin{align}
 & x_i(1-x_i) U_{x_ix_i} - x_ix_1U_{x_ix_1} - \cdots - x_ix_{i-1} U_{x_i x_{i-1}} - x_ix_{i+1} U_{x_i x_{i+1}} - \cdots - x_ix_{n} U_{x_i x_{n}} \nonumber\\
 & \quad - x_i(A+I) U_{x_i} - ({x_{1} U_{ x_{1}} + \cdots + x_{n} U_{ x_{n}}})B_i + U_{x_i} C_i - AUB_i = 0, \ i = 1, \dots, n.
 \end{align} 
\end{theorem}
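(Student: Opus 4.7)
The plan is to substitute the defining series of $F_{\mathcal{A}}$ into the left-hand side of the asserted identity and verify that, for each multi-index $\mathbf{m}=(m_1,\dots,m_n)$, the coefficient of $x_1^{m_1}\cdots x_n^{m_n}/(m_1!\cdots m_n!)$ vanishes. With $M=m_1+\cdots+m_n$, set $\pi_{\mathbf{m}}=(B_1)_{m_1}\cdots(B_n)_{m_n}(C_1)_{m_1}^{-1}\cdots(C_n)_{m_n}^{-1}$ and $a_{\mathbf{m}}=(A)_M\,\pi_{\mathbf{m}}$. Standard shifts of the summation indices show that the coefficient of $x_1^{m_1}\cdots x_n^{m_n}/(m_1!\cdots m_n!)$ is $m_ia_{\mathbf{m}+\mathbf{e}_i}$ in $x_iU_{x_ix_i}$, $m_i(m_i-1)a_{\mathbf{m}}$ in $x_i^2U_{x_ix_i}$, $m_im_ka_{\mathbf{m}}$ in $x_ix_kU_{x_ix_k}$ (for $k\ne i$), $m_ia_{\mathbf{m}}$ in $x_iU_{x_i}$, $a_{\mathbf{m}+\mathbf{e}_i}$ in $U_{x_i}$, and $a_{\mathbf{m}}$ in $U$.

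Next I would collapse the scalar contributions. Using $\sum_{k\ne i}m_k=M-m_i$, the identity $-m_i(m_i-1)-m_i(M-m_i)-m_i(A+I)=-m_i(A+MI)$ (reading $M-1$ as $(M-1)I$) together with $-MI-A=-(A+MI)$ reduce the coefficient of $x_1^{m_1}\cdots x_n^{m_n}/(m_1!\cdots m_n!)$ in the entire left-hand side to
\begin{equation*}
a_{\mathbf{m}+\mathbf{e}_i}(C_i+m_iI)-m_i(A+MI)a_{\mathbf{m}}-(A+MI)a_{\mathbf{m}}B_i.
\end{equation*}

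The final step exploits the commutativity hypotheses. Since $A$ commutes with each $A+kI$, one has $(A+MI)(A)_M=(A)_{M+1}$, so $(A+MI)a_{\mathbf{m}}=(A)_{M+1}\pi_{\mathbf{m}}$. The hypotheses $B_jB_k=B_kB_j$, $C_jB_k=B_kC_j$ and $C_jC_k=C_kC_j$ allow the factors $(B_i+m_iI)$ arising from $(B_i)_{m_i+1}=(B_i)_{m_i}(B_i+m_iI)$ and $(C_i+m_iI)^{-1}$ arising from $(C_i)_{m_i+1}^{-1}=(C_i+m_iI)^{-1}(C_i)_{m_i}^{-1}$ to be transported past every other Pochhammer symbol appearing in $\pi_{\mathbf{m}+\mathbf{e}_i}$, producing $\pi_{\mathbf{m}+\mathbf{e}_i}=\pi_{\mathbf{m}}(B_i+m_iI)(C_i+m_iI)^{-1}$ and hence $a_{\mathbf{m}+\mathbf{e}_i}(C_i+m_iI)=(A)_{M+1}\pi_{\mathbf{m}}(B_i+m_iI)$. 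Factoring $(A)_{M+1}\pi_{\mathbf{m}}$ on the left of the three surviving terms gives $(A)_{M+1}\pi_{\mathbf{m}}\bigl[(B_i+m_iI)-m_iI-B_i\bigr]=0$, as required.

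The principal obstacle is the noncommutative bookkeeping: each of the three commutativity hypotheses is consumed precisely to justify one rearrangement of Pochhammer factors, and the cancellation at the end succeeds only because the surplus factor $(B_i+m_iI)$ lands on the right of $\pi_{\mathbf{m}}$, matching the position of $B_i$ in the term $-(A+MI)a_{\mathbf{m}}B_i$. No analytic subtlety intervenes, as termwise differentiation is legitimate inside the region of convergence of $F_{\mathcal{A}}$ established earlier in the section.
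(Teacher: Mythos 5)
Your proposal is correct and follows essentially the same route as the paper's proof: substitute the defining series, shift indices to read off the coefficient of each monomial, and use the three commutativity hypotheses to transport the factors $(B_i+m_iI)$ and $(C_i+m_iI)^{-1}$ to the right of the Pochhammer product so that the final cancellation $(B_i+m_iI)-m_iI-B_i=0$ goes through. The only cosmetic difference is that the paper organizes the computation by showing the second-order block equals the sum of the remaining terms, whereas you show the total coefficient of each monomial vanishes; the underlying identities are identical.
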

\begin{proof}
	Let 
	\begin{align}
		U &= F_{\mathcal{A}}(A, B_1, \dots, B_n; C_1, \dots, C_n; x_1, \dots, x_n) = \sum_{m_1, \dots, m_n = 0}^{\infty} \mathcal{A}_{m_1, \dots, m_n} x_1^{m_1} \cdots x_n^{m_n}.
	\end{align}
	Then, we have
	\begin{align}
		&x_i(1-x_i) U_{x_ix_i} - x_ix_1U_{x_ix_1} - \cdots - x_ix_{i-1} U_{x_i x_{i-1}} - x_ix_{i+1} U_{x_i x_{i+1}} - \cdots - x_ix_{n} U_{x_i x_{n}}\nonumber\\
		& = \sum_{m_1, \dots, m_n = 0}^{\infty} (A+(m_1 + \cdots + m_n)I) \mathcal{A}_{m_1, \dots, m_n} m_i (B_i + m_i I) (C_i + m_i I)^{-1}\nonumber\\
		& \quad \times  x_1^{m_1} \cdots x_n^{m_n} - \sum_{m_1, \dots, m_n = 0}^{\infty} m_i (m_i - 1) \mathcal{A}_{m_1, \dots, m_n} x_1^{m_1} \cdots x_n^{m_n} \nonumber\\
		& \quad - \sum_{m_1, \dots, m_n = 0}^{\infty} (m_i m_1 + \cdots+ m_i m_{i-1} + m_i m_{i+1} + \cdots + m_i m_n)\mathcal{A}_{m_1, \dots, m_n} x_1^{m_1} \cdots x_n^{m_n}\nonumber\\
		& = A \sum_{m_1, \dots, m_n = 0}^{\infty} \mathcal{A}_{m_1, \dots, m_n}  x_1^{m_1} \cdots x_n^{m_n} B_i - \sum_{m_1, \dots, m_n = 0}^{\infty} (A+(m_1 + \cdots + m_n)I) \mathcal{A}_{m_1, \dots, m_n}\nonumber\\
		& \quad \times x_1^{m_1} \cdots x_n^{m_n} C_i (B_i + m_i I) (C_i + m_i I)^{-1} + (A+ I) \sum_{m_1, \dots, m_n = 0}^{\infty} \mathcal{A}_{m_1, \dots, m_n} \nonumber\\
		& \quad \times m_i x_1^{m_1} \cdots x_n^{m_n} + \sum_{m_1, \dots, m_n = 0}^{\infty} (m_i m_1 +  \cdots + m_i m_n)\mathcal{A}_{m_1, \dots, m_n} x_1^{m_1} \cdots x_n^{m_n} B_i\nonumber\\
		& =  x_i(A+I) U_{x_i} + ({x_{1} U_{ x_{1}}  + \cdots + x_{n} U_{ x_{n}}}) B_i - U_{x_i} C_i + AUB_i.
	\end{align}
	This completes the proof.
\end{proof}
In the next three theorems, we give the system of partial matrix differential equations of bilateral type satisfied by remaining three generalized Lauricella matrix functions. The proofs are similar to Theorem~\ref{3.2.1} and are omitted. 
\begin{theorem}
 Let $A_i, B_i, C$ be matrices in $\mathbb{C}^{r \times r}$ such that $A_iA_j = A_jA_i$, $B_iB_j = B_jB_i$ and $CB_j = B_jC$, for each $i,j=1,\dots,n$. Then the matrix function  $F_{\mathcal{B}}$ satisfies the following system of partial matrix differential equations of bilateral type
\begin{align}
 & x_i(1-x_i) U_{x_ix_i} + x_1U_{x_ix_1} + \cdots + x_{i-1} U_{x_i x_{i-1}} + x_{i+1} U_{x_i x_{i+1}}  + \cdots +  x_{n} U_{x_i x_{n}}\nonumber\\
& \quad - x_i(A_i+I) U_{x_i} - x_{i} U_{ x_{i}} B_i   + U_{x_i} C - A_iUB_i = 0, \ i = 1, \dots, n.
 \end{align} 
\end{theorem}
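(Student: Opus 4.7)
The plan is to imitate the derivation of Theorem~\ref{3.2.1}: write
\[
U = F_{\mathcal{B}} = \sum_{m_1,\dots,m_n=0}^{\infty} \mathcal{B}_{m_1,\dots,m_n}\,x_1^{m_1}\cdots x_n^{m_n},
\]
with $\mathcal{B}_{m_1,\dots,m_n}=(A_1)_{m_1}\cdots(A_n)_{m_n}(B_1)_{m_1}\cdots(B_n)_{m_n}(C)^{-1}_{m_1+\cdots+m_n}/(m_1!\cdots m_n!)$, expand each term of the proposed identity as a formal power series in $x_1,\dots,x_n$, and verify that the coefficient of every monomial $x_1^{m_1}\cdots x_n^{m_n}$ vanishes.

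First I would assemble the second-order block $x_i(1-x_i)U_{x_ix_i}+\sum_{j\ne i}x_jU_{x_ix_j}$. The $U_{x_ix_i}$ piece produces the scalar factor $m_i(m_i-1)$; multiplying by $x_i$ and re-indexing $m_i\mapsto m_i+1$ then forces the shifted Pochhammer symbols $(A_i)_{m_i+1}$, $(B_i)_{m_i+1}$ and $(C)^{-1}_{|m|+1}$ to appear, with $|m|=m_1+\cdots+m_n$, which split cleanly via $(A_i)_{m_i+1}=(A_i)_{m_i}(A_i+m_iI)$, $(B_i)_{m_i+1}=(B_i)_{m_i}(B_i+m_iI)$ and $(C)^{-1}_{|m|+1}=(C+|m|I)^{-1}(C)^{-1}_{|m|}$. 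The cross terms $x_jU_{x_ix_j}$ for $j\ne i$ contribute only integer factors $m_im_j$ with no shift. Summing these three contributions yields the coefficient of $x_1^{m_1}\cdots x_n^{m_n}$ in the second-order block as a single closed expression in $\mathcal{B}_{m_1,\dots,m_n}$ and the shifted matrix factors.

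Next I would expand $x_i(A_i+I)U_{x_i}+x_iU_{x_i}B_i-U_{x_i}C+A_iUB_i$ in the same way: the $x_iU_{x_i}$-type terms require the shift $m_i\mapsto m_i+1$, while $A_iUB_i$ is read off directly. The claimed PDE then reduces termwise to a per-index matrix identity, whose scalar reduction is the classical Lauricella $F_B^{(n)}$ identity; its matrix form follows by expanding $(A_i+m_iI)(B_i+m_iI)$ and using $(C+|m|I)\,(C)^{-1}_{|m|+1}=(C)^{-1}_{|m|}$ to re-absorb the $C$-factor.

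The main obstacle is precisely this bookkeeping. No hypothesis lets one commute $A_i$ past $B_j$, so every manipulation must preserve the left-to-right order: $A$-factors, then $B$-factors, then $C^{-1}$. The assumptions $A_iA_j=A_jA_i$, $B_iB_j=B_jB_i$ and $CB_j=B_jC$ are exactly what is needed to slide the newly produced $(A_i+m_iI)$ through the $A$-block, $(B_i+m_iI)$ through the $B$-block, and $(C+|m|I)^{-1}$ through the $B$-block into its proper slot; they are also the reason the statement must have $A_iUB_i$ (with $A_i$ on the extreme left and $B_i$ on the extreme right) and $U_{x_i}C$ (with $C$ on the right). Once these admissible commutations are applied systematically, every coefficient collapses and no analytic or convergence argument is needed at this stage.
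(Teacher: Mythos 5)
Your overall strategy is the right one and is what the paper intends: the paper omits this proof, stating only that it is ``similar to Theorem~\ref{3.2.1}'', i.e.\ termwise expansion of the power series followed by the commutation slides, and you have correctly matched the hypotheses $A_iA_j=A_jA_i$, $B_iB_j=B_jB_i$, $CB_j=B_jC$ to the three slides that are actually needed. However, your bookkeeping of which terms carry the index shift $m_i\mapsto m_i+1$ is inverted in two places, and the central cancellation fails if one follows the plan as written. For $F_{\mathcal{B}}$ the cross terms are $x_jU_{x_ix_j}$, not $x_ix_jU_{x_ix_j}$ as in the $F_{\mathcal{A}}$ system you are imitating: writing $|m|=m_1+\cdots+m_n$ and $m+e_i=(m_1,\dots,m_i+1,\dots,m_n)$, the coefficient of $x_1^{m_1}\cdots x_n^{m_n}$ in $x_jU_{x_ix_j}$ is the \emph{shifted} quantity $(m_i+1)m_j\,\mathcal{B}_{m+e_i}$, not the unshifted $m_im_j\,\mathcal{B}_m$ you claim. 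Conversely, $x_i(A_i+I)U_{x_i}$ and $x_iU_{x_i}B_i$ are \emph{unshifted} (coefficients $m_i(A_i+I)\mathcal{B}_m$ and $m_i\mathcal{B}_mB_i$), while the term that is shifted in the first-order block is $U_{x_i}C$, with coefficient $(m_i+1)\mathcal{B}_{m+e_i}C$.

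This is not a cosmetic slip, because the proof closes only when the shifted contributions assemble the full sum $|m|$. Using your (correct) splitting of the Pochhammer symbols together with the hypotheses, one has $(m_i+1)\mathcal{B}_{m+e_i}=(A_i+m_iI)\,\mathcal{B}_m\,(C+|m|I)^{-1}(B_i+m_iI)$, and the shifted terms combine as
\[
(m_i+1)\Bigl(\textstyle\sum_{j}m_j\Bigr)\mathcal{B}_{m+e_i}+(m_i+1)\mathcal{B}_{m+e_i}C
=(A_i+m_iI)\,\mathcal{B}_m\,(C+|m|I)^{-1}(B_i+m_iI)\bigl(C+|m|I\bigr)
=(A_i+m_iI)\,\mathcal{B}_m\,(B_i+m_iI),
\]
where the last step uses $CB_i=B_iC$. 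The coefficient $\sum_j m_j=|m|$ multiplying $\mathcal{B}_{m+e_i}$ is supplied precisely by the shifted cross terms $x_jU_{x_ix_j}$; if those are treated as unshifted, the factor $(C+|m|I)^{-1}$ coming from $(C)^{-1}_{|m|+1}$ is never neutralized and the stray $m_im_j\mathcal{B}_m$ terms have nothing to cancel against. With the shifts assigned correctly, the remaining unshifted terms $-m_i(m_i-1)\mathcal{B}_m-m_i(A_i+I)\mathcal{B}_m-m_i\mathcal{B}_mB_i-A_i\mathcal{B}_mB_i$ exactly cancel the expansion of $(A_i+m_iI)\mathcal{B}_m(B_i+m_iI)$, and your argument then coincides with the paper's.
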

\begin{theorem}
 Let $A, B, C_i$ be matrices in $\mathbb{C}^{r \times r}$ such that $C_iC_j = C_jC_i$ and $C_jB = BC_j$, for each $i,j=1,\dots,n$. Then the matrix function $F_{\mathcal{C}}$ satisfies a system of bilateral type partial matrix differential equations
\begin{align}
&x_i(1-x_i) U_{x_ix_i} - x_1^2 U_{x_1x_1} - \cdots - x_{i-1}^2 U_{x_{i-1}x_{i-1}} - x_{i+1}^2 U_{x_{i+1}x_{i+1}} - \cdots - x_{n}^2 U_{x_{n}x_{n}} - 2x_1x_2 \nonumber\\
& \quad \times U_{x_1x_2} - \cdots - 2x_rx_sU_{x_rx_s} - \cdots - 2x_{n-1}x_nU_{x_{n-1}x_n} - (A+I) (x_1U_{x_1} + \cdots + x_nU_{x_n}) \nonumber\\
&\quad + U_{x_i} C_i - (x_1U_{x_1} + \cdots + x_nU_{x_n}) B - AUB = 0, \ (r\ne s, \ r,s,i = 1, \dots, n).
\end{align}
\end{theorem}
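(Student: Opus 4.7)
The plan is to follow the coefficient-matching strategy used in the proof of Theorem~\ref{3.2.1}. I would first write
\[
U=\sum_{m_1,\dots,m_n=0}^{\infty}\mathcal{C}_{m_1,\dots,m_n}\,x_1^{m_1}\cdots x_n^{m_n},\qquad
\mathcal{C}_{m_1,\dots,m_n}=\frac{(A)_{m}(B)_{m}\,(C_1)_{m_1}^{-1}\cdots(C_n)_{m_n}^{-1}}{m_1!\cdots m_n!},
\]
with $m=m_1+\cdots+m_n$, and then read off the coefficient of the monomial $x_1^{m_1}\cdots x_n^{m_n}$ from every term on the left-hand side of the asserted equation.

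Using the combinatorial identity $\sum_{j}m_j(m_j-1)+\sum_{r\neq s}m_rm_s=m(m-1)$, the second-order block $-x_i^{2}U_{x_ix_i}-\sum_{j\neq i}x_j^{2}U_{x_jx_j}-2\sum_{r<s}x_rx_sU_{x_rx_s}$ collapses to $-m(m-1)\,\mathcal{C}_{\mathbf m}$, while the surviving $x_iU_{x_ix_i}$ piece contributes $m_i(m_i+1)\,\mathcal{C}_{\mathbf m+e_i}$ after the index shift $m_i\mapsto m_i+1$. Combining these with the coefficients coming from $-(A+I)\sum_j x_jU_{x_j}$, $U_{x_i}C_i$, $-\left(\sum_j x_jU_{x_j}\right)B$ and $-AUB$, and using the regroupings $m(m-1)+(A+I)m=m(A+mI)$ and $m\,\mathcal{C}_{\mathbf m}+\mathcal{C}_{\mathbf m}B=\mathcal{C}_{\mathbf m}(B+mI)$, the whole identity to be checked reduces to the single matrix equation
\[
(m_i+1)\,\mathcal{C}_{\mathbf m+e_i}(C_i+m_iI)=(A+mI)\,\mathcal{C}_{\mathbf m}(B+mI).
\]

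I would verify this by massaging both sides into the same canonical product $(A)_{m+1}(B)_{m+1}\prod_j(C_j)_{m_j}^{-1}/\prod_j m_j!$. On the left, the hypothesis $C_iC_j=C_jC_i$ lets $(C_i+m_iI)$ slide past each $(C_j)_{m_j}^{-1}$ with $j\neq i$, after which $(C_i)_{m_i+1}^{-1}(C_i+m_iI)=(C_i)_{m_i}^{-1}$, coming from $(C_i)_{m_i+1}=(C_i)_{m_i}(C_i+m_iI)$, finishes the job. On the right, the hypothesis $C_jB=BC_j$ lets $(B+mI)$ slide leftward past every $(C_j)_{m_j}^{-1}$, and then the standard Pochhammer recurrences $(A+mI)(A)_m=(A)_{m+1}$ and $(B)_m(B+mI)=(B)_{m+1}$ produce exactly the same product. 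The only genuine obstacle is keeping track of non-commuting factors: since $A$ is not assumed to commute with either $B$ or any $C_j$, the factor $(A)_m$ must stay anchored on the extreme left throughout, and the stated commutativity hypotheses are precisely the minimal ones that make each of the rearrangements above legitimate.
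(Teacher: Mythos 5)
Your proposal is correct and matches the paper's approach: the paper omits this proof precisely because it is the same termwise coefficient-matching argument as in Theorem~\ref{3.2.1}, which is what you carry out. Your reduction to the single recurrence $(m_i+1)\,\mathcal{C}_{\mathbf m+e_i}(C_i+m_iI)=(A+mI)\,\mathcal{C}_{\mathbf m}(B+mI)$, and the observation that the hypotheses $C_iC_j=C_jC_i$ and $C_jB=BC_j$ are exactly what is needed to slide the factors into the canonical product while $(A)_m$ stays anchored on the left, are both accurate.
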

\begin{theorem}
 Let $A, B_i, C$ be matrices in $\mathbb{C}^{r \times r}$ such that $B_iB_j = B_jB_i$ and $CB_j = B_jC$, for each $i,j=1,\dots,n$. Then the matrix function $F_{\mathcal{D}}$ satisfies a system of bilateral type partial matrix differential equations
\begin{align}
&x_i(1-x_i) U_{x_ix_i} + x_1(1-x_i) U_{x_ix_1} + \cdots + x_{i-1}(1-x_i) U_{x_ix_{i-1}} + x_{i+1}(1-x_i) U_{x_ix_{i+1}} \nonumber\\
& \quad + \cdots + x_{n}(1-x_i) U_{x_ix_{n}} - (A+I) x_iU_{x_i}  + U_{x_i} C - (x_1U_{x_1} + \cdots + x_{n} U_{x_{n}}) B_i\nonumber\\
&\quad - AUB_i = 0, \ i = 1, \dots, n.
\end{align}
\end{theorem}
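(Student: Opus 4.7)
My approach mirrors the proof of Theorem~\ref{3.2.1}. Write $U = \sum_m \mathcal{D}_m\, x_1^{m_1}\cdots x_n^{m_n}$ with
\[
\mathcal{D}_m = \frac{(A)_\lambda\,(B_1)_{m_1}\cdots(B_n)_{m_n}\,(C)^{-1}_\lambda}{m_1!\cdots m_n!}, \qquad \lambda = m_1 + \cdots + m_n,
\]
substitute the series into the left-hand side, reindex every derivative term so that each summand appears as a coefficient multiplying $x_1^{m_1}\cdots x_n^{m_n}$, and verify that this coefficient vanishes for every multi-index $m$.

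The crucial algebraic identity to establish is
\[
(m_i+1)\,\mathcal{D}_{m+e_i} \;=\; (A+\lambda I)\,\mathcal{D}_m\,(B_i+m_i I)(C+\lambda I)^{-1},
\]
which follows from the Pochhammer recursions $(A)_{\lambda+1}=(A+\lambda I)(A)_\lambda$, $(B_i)_{m_i+1}=(B_i)_{m_i}(B_i+m_iI)$ and $(C)^{-1}_{\lambda+1}=(C)^{-1}_\lambda(C+\lambda I)^{-1}$. This is the one step where the hypotheses $B_iB_j=B_jB_i$ and $CB_j=B_jC$ are needed: they let $(B_i+m_iI)$ commute past every $(B_j)_{m_j}$ and through $(C)^{-1}_\lambda$, so that it can be pulled to the right of the whole $B$-$C$ block in $\mathcal{D}_m$, and they also permit the free swapping of $(B_i+m_iI)$ and $(C+\lambda I)^{-1}$.

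With this identity in hand, the computation splits naturally into four pieces. The second-derivative block $x_i(1-x_i)U_{x_ix_i}+\sum_{j\ne i}x_j(1-x_i)U_{x_ix_j}$ telescopes, after reindexing, to
\[
(m_i+1)\lambda\,\mathcal{D}_{m+e_i} - m_i(\lambda-1)\,\mathcal{D}_m.
\]
The first-order contributions are $-m_i(A+I)\mathcal{D}_m$ from $-(A+I)x_iU_{x_i}$, $(m_i+1)\mathcal{D}_{m+e_i}\,C$ from $U_{x_i}C$, and $-(A+\lambda I)\mathcal{D}_m B_i$ from the bilateral block $-(x_1U_{x_1}+\cdots+x_nU_{x_n})B_i - AUB_i$. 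Using the identity together with $(\lambda I + C)(C+\lambda I)^{-1}=I$, the group $\lambda(m_i+1)\mathcal{D}_{m+e_i} + (m_i+1)\mathcal{D}_{m+e_i}C$ collapses to $(A+\lambda I)\mathcal{D}_m(B_i+m_iI)$, which cancels the $B_i$-on-the-right term and leaves precisely $m_i\big[(A+\lambda I)-(\lambda-1)I-(A+I)\big]\mathcal{D}_m=0$.

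The main obstacle will be bookkeeping under non-commutativity: since $A$ is not assumed to commute with anything, every factor $(A+\alpha I)$ must remain on the left throughout, and one must verify that each rearrangement required to route $(B_i+m_iI)(C+\lambda I)^{-1}$ to the right of $\mathcal{D}_m$ is genuinely licensed by the hypotheses $B_iB_j=B_jB_i$ and $CB_j=B_jC$. Once this is done carefully, the proof is a straightforward, if tedious, coefficient-matching exercise parallel to the one carried out for $F_{\mathcal{A}}$.
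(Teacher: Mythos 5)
Your proposal is correct: the recursion $(m_i+1)\,\mathcal{D}_{m+e_i}=(A+\lambda I)\,\mathcal{D}_m\,(B_i+m_iI)(C+\lambda I)^{-1}$ is exactly what the commutativity hypotheses license, and your coefficient bookkeeping (the second-order block giving $(m_i+1)\lambda\,\mathcal{D}_{m+e_i}-m_i(\lambda-1)\,\mathcal{D}_m$ and the final cancellation $m_i[(A+\lambda I)-(\lambda-1)I-(A+I)]=0$) checks out. This is essentially the same series-substitution argument the paper carries out for $F_{\mathcal{A}}$ in Theorem~\ref{3.2.1} and declares, without writing it out, to transfer to $F_{\mathcal{D}}$.
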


\subsection{Region of convergence}
We give here the regions of convergence of the four Lauricella matrix functions. 
We give below the proof for the Lauricella matrix function $F_{\mathcal{A}}$ and present the remaining results without proof.
\begin{theorem}\label{t3.5}
Let $A,B_1,\dots,B_n,C_1,\dots,C_n$ be positive stable matrices in $\mathbb{C}^{r \times r}$ such that $\alpha(A) < 1, \alpha(B_1) < \beta(C_1), \dots, \alpha(B_n) < \beta(C_n)$. Then the series $F_{\mathcal{A}}$ defined in \eqref{eq2.1} converges absolutely for $\vert x_1\vert  + \cdots + \vert x_n\vert < 1$.
\end{theorem}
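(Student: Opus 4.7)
The plan is to establish absolute convergence by norm-bounding the generic term of the series using asymptotic estimates for matrix Pochhammer symbols and then collapsing the $n$-fold sum via the multinomial theorem.

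First I would derive the following asymptotic bounds for any positive stable $M\in\mathbb{C}^{r\times r}$: writing $(M)_k=\Gamma^{-1}(M)\Gamma(M+kI)$ and combining the limit formula \eqref{eq10} with the exponential bound \eqref{eq09} evaluated at $t=\log k$ to control $\|k^{\pm M}\|=\|e^{\pm M\log k}\|$, one obtains
\[ \|(M)_k\|\le K_M\,(k-1)!\,k^{\alpha(M)}(1+\log k)^{r-1},\qquad \|(M)_k^{-1}\|\le K_M'\,\frac{(1+\log k)^{r-1}}{(k-1)!\,k^{\beta(M)}}. \]
Applying these to $(B_i)_{m_i}$ and $(C_i)_{m_i}^{-1}$ produces a critical cancellation of the $(m_i-1)!$ factors, leaving
\[ \|(B_i)_{m_i}(C_i)_{m_i}^{-1}\|\le K_i\,m_i^{\alpha(B_i)-\beta(C_i)}(1+\log m_i)^{2(r-1)}, \]
which is dominated by a polylog factor in $m_i$ because $\alpha(B_i)<\beta(C_i)$.

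Next, setting $N=m_1+\cdots+m_n$ and using the analogous bound for $\|(A)_N\|$, the norm of the generic term of $F_{\mathcal{A}}$ divided by $m_1!\cdots m_n!$ is dominated by a constant multiple of
\[ \frac{(N-1)!}{m_1!\cdots m_n!}\,N^{\alpha(A)}(1+\log N)^{r-1}\prod_{i=1}^n(1+\log m_i)^{2(r-1)}|x_1|^{m_1}\cdots|x_n|^{m_n}. \]
The elementary identity $(N-1)!/(m_1!\cdots m_n!)=N^{-1}\binom{N}{m_1,\dots,m_n}$ together with the multinomial theorem
\[ \sum_{m_1+\cdots+m_n=N}\binom{N}{m_1,\dots,m_n}|x_1|^{m_1}\cdots|x_n|^{m_n}=(|x_1|+\cdots+|x_n|)^N \]
collapses the sum over $(m_1,\dots,m_n)$ at fixed $N$, and the outer series in $N$ is dominated by $\sum_{N\ge 0} N^{\alpha(A)-1}(1+\log N)^{c}(|x_1|+\cdots+|x_n|)^N$ for some fixed constant $c$, which converges by the root test whenever $|x_1|+\cdots+|x_n|<1$. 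The hypothesis $\alpha(A)<1$ ensures that the leading algebraic factor is actually decaying in $N$.

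The main obstacle is the careful bookkeeping of the polylogarithmic factors coming from $\|k^M\|\le k^{\alpha(M)}(1+\log k)^{r-1}$ in the matrix setting, which are absent from the classical scalar Lauricella argument; they are ultimately absorbed by the geometric factor $t^N$ for any $t<1$, but one must verify that the cancellation between $(B_i)_{m_i}$ and $(C_i)_{m_i}^{-1}$ is clean enough that the residual polynomial growth in the $m_i$ is killed by the strict inequality $\alpha(B_i)<\beta(C_i)$. A minor technical point is handling the boundary cases $m_i=0$, where the asymptotic estimates do not apply but the Pochhammer symbols equal $I$; those terms contribute a trivial bound.
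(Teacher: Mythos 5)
Your proposal is correct and follows essentially the same route as the paper: the limit formula \eqref{eq10} to replace the Pochhammer symbols by $(k-1)!\,k^{\pm M}$ up to gamma-function constants, the Schur bound \eqref{eq09} to control $\Vert k^{M}\Vert$ by $k^{\alpha(M)}$ times polylogarithmic factors, and the identity $(N-1)!/(m_1!\cdots m_n!)=N^{-1}\binom{N}{m_1,\dots,m_n}$ with the multinomial theorem to produce the factor $(\vert x_1\vert+\cdots+\vert x_n\vert)^{N}$. If anything, your final step (summing over $N$ and invoking the root test) is more complete than the paper's, which only observes that the general term tends to zero before asserting absolute convergence.
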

\begin{proof}
Let $\mathcal{A}_{m_1, \dots, m_n} x_1^{m_1} \cdots x_n^{m_n}$ denote the general term of the series $F_{\mathcal{A}}$. Then 
using \eqref{eq10}, 
we get
\begin{align}
\Vert \mathcal{A}_{m_1, \dots, m_n} x_1^{m_1} \cdots x_n^{m_n}\Vert & \le N \left\Vert (m_1 + \cdots + m_n)^A\right\Vert \prod_{i=1}^{n} \left(\left\Vert  (m_i)^{B_i}\right\Vert \left\Vert  (m_i)^{-C_i}\right\Vert\right)\nonumber\\
& \quad \times \frac{(m_1 + \cdots + m_n - 1)!}{m_1! \cdots m_n!} \, \vert x_1\vert^{m_1} \cdots \vert x_n\vert^{m_n}, \label{2.7}
\end{align}
where $ N = \Vert \Gamma^{-1}(A) \Vert \Vert \Gamma^{-1}(B_1)\Vert \cdots \Vert \Gamma^{-1}(B_n)\Vert \Vert \Gamma(C_1) \Vert \cdots \Vert \Gamma(C_n) \Vert$. The Schur decomposition \eqref{eq09} 
yields
\begin{align}
&\left\Vert (m_1 + \cdots + m_n)^A\right\Vert \prod_{i=1}^{n} \left(\left\Vert  (m_i)^{B_i}\right\Vert \left\Vert  (m_i)^{-C_i}\right\Vert\right) \le S \ (m_1 + \cdots + m_n)^{\alpha(A)} \prod_{i=1}^{n} (m_i)^{\alpha(B_i) - \beta(C_i)},\label{2.8}
\end{align}
where 
\begin{align}
S & = \sum_{j= 0}^{r-1} \frac{1}{j!} \left(\Vert A\Vert r^{1/2} \ln (m_1 + \cdots + m_n)\right)^j  \prod_{i=1}^{n} \left(\sum_{j=0}^{r-1} \frac{1}{j!} \left(\max\{\Vert B_i\Vert, \Vert C_i\Vert\} r^{1/2} \ln m_i\right)^j \right)^2.
\end{align}
Using Equation \eqref{2.8} in  \eqref{2.7}, we get
\begin{align}
&\Vert \mathcal{A}_{m_1, \dots, m_n} x_1^{m_1} \cdots x_n^{m_n}\Vert\nonumber\\
& \le  N \ S \  (m_1 + \cdots + m_n)^{\alpha(A) - 1} \prod_{i=1}^{n} (m_i)^{\alpha(B_i) - \beta(C_i)} (\vert x_1\vert + \cdots + \vert x_n\vert)^{m_1 + \cdots + m_n}. 
\end{align}
Hence, for $\alpha(A) < 1, \alpha(B_1) < \beta(C_1), \dots, \alpha(B_n) < \beta(C_n)$ and $\vert x_1\vert + \cdots + \vert x_n\vert < 1$, $\Vert \mathcal{A}_{m_1, \dots, m_n}$ $ x_1^{m_1} \cdots x_n^{m_n}\Vert \rightarrow 0$ as $m_1$, $\dots$, $m_n \rightarrow \infty$. Therefore the series $F_{\mathcal{A}}$ converges absolutely.
  \end{proof}
We remark that the region of convergence of matrix functions  $F_{\mathcal{A}}$ obtained above is 
same as in \cite{ds4}, where all the matrix considered were commuting. 
Next three theorems give the regions of convergence for generalized Lauricella matrix functions  $F_{\mathcal{B}},\ F_{\mathcal{C}}$ and $F_{\mathcal{D}}$.  The proofs are similar to Theorem~\ref{t3.5} and are omitted.
\begin{theorem}
Let  $A_1$, $\dots$, $A_n$, $B_1$, $\dots$, $B_n$ and $C$ be positive stable matrices in $\mathbb{C}^{r\times r}$ such that $\alpha(A_1) + \alpha(B_1) < 2, \, \dots, \, \alpha(A_n) + \alpha(B_n) < 2,   \ \beta(C) > 1$. Then the series $F_{\mathcal{B}}$ defined in \eqref{eq2.2} converges absolutely for $\vert x_1\vert, \dots, \vert x_n\vert < 1$.	
\end{theorem}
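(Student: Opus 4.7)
The plan is to mirror the strategy of the proof of Theorem~\ref{t3.5}, adjusting the bookkeeping of factorials to reflect the fact that in $F_{\mathcal{B}}$ the sum $N := m_1+\cdots+m_n$ appears as the index of a \emph{denominator} Pochhammer symbol $(C)_N^{-1}$ rather than a numerator one. Denote the general term of the series by $\mathcal{B}_{m_1,\dots,m_n}x_1^{m_1}\cdots x_n^{m_n}$. First I would apply the limit formula \eqref{eq10} to each of $(A_i)_{m_i}$, $(B_i)_{m_i}$ and $(C)_N^{-1}$, producing, up to constants involving norms of the appropriate $\Gamma^{\pm 1}$-factors, the asymptotic estimates
\[
\|(A_i)_{m_i}\|\le K_i^A(m_i-1)!\,\|m_i^{A_i}\|,\qquad \|(B_i)_{m_i}\|\le K_i^B(m_i-1)!\,\|m_i^{B_i}\|,\qquad \|(C)_N^{-1}\|\le K_0\,\frac{\|N^{-C}\|}{(N-1)!},
\]
in exact analogy with how \eqref{eq10} is used in the proof of Theorem~\ref{t3.5}.

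Next I would invoke the Schur-decomposition bound \eqref{eq09} to convert each matrix power to a scalar-dominated estimate: $\|m_i^{A_i}\|\le m_i^{\alpha(A_i)}P(\ln m_i)$, $\|m_i^{B_i}\|\le m_i^{\alpha(B_i)}P(\ln m_i)$ and $\|N^{-C}\|\le N^{-\beta(C)}P(\ln N)$, where $P(\cdot)$ denotes the degree $(r-1)$ polynomial on the right-hand side of \eqref{eq09} (using $\alpha(-C)=-\beta(C)$). Collecting every polynomial-in-logarithm contribution into a single factor $S$ exactly as in \eqref{2.8} yields
\[
\|\mathcal{B}_{m_1,\dots,m_n}x_1^{m_1}\cdots x_n^{m_n}\|\le K\,S\,\frac{\prod_i (m_i-1)!^2}{(N-1)!\prod_i m_i!}\,N^{-\beta(C)}\prod_i m_i^{\alpha(A_i)+\alpha(B_i)}\prod_i|x_i|^{m_i}.
\]

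The main technical step--and the only place where the argument really departs from that of Theorem~\ref{t3.5}--is disentangling the factorial factor. Using $(m_i-1)!=m_i!/m_i$ and the identity $\prod_i m_i!/(N-1)! = N/\binom{N}{m_1,\dots,m_n}$, together with the trivial bound $\binom{N}{m_1,\dots,m_n}\ge 1$, one obtains
\[
\frac{\prod_i (m_i-1)!^2}{(N-1)!\prod_i m_i!}=\frac{N}{\prod_i m_i^2\,\binom{N}{m_1,\dots,m_n}}\le\frac{N}{\prod_i m_i^2},
\]
leading to the clean majorant
\[
\|\mathcal{B}_{m_1,\dots,m_n}x_1^{m_1}\cdots x_n^{m_n}\|\le K\,S\,N^{1-\beta(C)}\prod_i m_i^{\alpha(A_i)+\alpha(B_i)-2}\prod_i|x_i|^{m_i}.
\]
Under the hypotheses $\beta(C)>1$ and $\alpha(A_i)+\alpha(B_i)<2$ the factor $N^{1-\beta(C)}$ is bounded by $1$ and each $m_i^{\alpha(A_i)+\alpha(B_i)-2}\to 0$ as $m_i\to\infty$, while $|x_i|<1$ produces geometric decay in every coordinate that dominates the logarithmic growth in $S$. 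Hence the general term tends to zero as any $m_i\to\infty$, and the same concluding argument as in Theorem~\ref{t3.5} delivers absolute convergence of the $n$-uple series throughout the polydisc $|x_1|,\dots,|x_n|<1$.
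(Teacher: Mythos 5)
Your proposal is correct and follows exactly the route the paper intends: the paper omits this proof, stating only that it is "similar to Theorem~\ref{t3.5}", and your argument is precisely that adaptation, with the limit formula \eqref{eq10} applied to $(A_i)_{m_i}$, $(B_i)_{m_i}$ and $(C)_{m_1+\cdots+m_n}^{-1}$, the Schur bound \eqref{eq09} collected into a factor $S$, and the factorial bookkeeping $\prod_i(m_i-1)!^2/\bigl((N-1)!\prod_i m_i!\bigr)\le N/\prod_i m_i^2$ carried out correctly so that the hypotheses $\beta(C)>1$ and $\alpha(A_i)+\alpha(B_i)<2$ enter exactly where needed. Your final majorant is in fact summable over the polydisc, so the conclusion is sound (and indeed slightly more carefully justified than the paper's own "general term tends to zero" phrasing in Theorem~\ref{t3.5}).
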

\begin{theorem}
Let $A$, $B$, $C_1, \dots, C_n$ be positive stable matrices in $\mathbb{C}^{r\times r}$ such that $\alpha(A) + \alpha(B) < 2, \, \beta(C_1) > 1, \dots, \beta(C_n) > 1$. 
 Then the series $F_{\mathcal{C}}$ defined in \eqref{2.3} converges absolutely for $\sqrt{\vert x_1\vert} + \cdots + \sqrt{\vert x_n\vert} < 1$.	
\end{theorem}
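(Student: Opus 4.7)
The plan is to follow the same scheme as the proof of Theorem~\ref{t3.5}, the essential new feature being that two numerator Pochhammer symbols $(A)_M$ and $(B)_M$ (with $M=m_1+\cdots+m_n$) produce a \emph{squared} multinomial coefficient, and it is precisely this squaring that converts the convergence region $\sum|x_i|<1$ into the smaller region $\sum\sqrt{|x_i|}<1$.

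First I would denote by $\mathcal{C}_{m_1,\ldots,m_n}x_1^{m_1}\cdots x_n^{m_n}$ the general term of the series $F_{\mathcal{C}}$. Applying \eqref{eq10} to each of $(A)_M$, $(B)_M$, and $(C_i)_{m_i}^{-1}$, followed by the Schur decomposition bound \eqref{eq09} for the norms of $M^A$, $M^B$ and $m_i^{-C_i}$, I obtain an estimate of the shape
\[
\Vert\mathcal{C}_{m_1,\ldots,m_n}x_1^{m_1}\cdots x_n^{m_n}\Vert
\le N\,S\,M^{\alpha(A)+\alpha(B)-2}\prod_{i=1}^n m_i^{1-\beta(C_i)}\left(\frac{M!}{m_1!\cdots m_n!}\right)^{\!2}|x_1|^{m_1}\cdots|x_n|^{m_n},
\]
where $N$ collects gamma-matrix norms, $S$ collects the polynomial-in-logarithm factors from \eqref{eq09}, the exponent $\alpha(A)+\alpha(B)-2$ arises because $(A)_M$ and $(B)_M$ together contribute $((M-1)!)^2=(M!)^2/M^2$, and the $m_i^{1-\beta(C_i)}$ comes from writing $(m_i-1)!\,m_i!=(m_i!)^2/m_i$ and absorbing the $m_i$ into $m_i^{-\beta(C_i)}$.

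Second, the key combinatorial estimate I would invoke is the inequality
\[
\left(\frac{M!}{m_1!\cdots m_n!}\right)^{\!2}\le \binom{2M}{2m_1,\ldots,2m_n},
\]
which follows by an easy induction on $n$ from the Vandermonde identity $\binom{2a+2b}{a+b}=\sum_k\binom{2a}{k}\binom{2b}{a+b-k}$ on retaining only the $k=a$ summand. Setting $y_i=\sqrt{|x_i|}$ and applying the multinomial theorem then gives
\[
\sum_{m_1+\cdots+m_n=M}\binom{2M}{2m_1,\ldots,2m_n}|x_1|^{m_1}\cdots|x_n|^{m_n}
\le\bigl(\sqrt{|x_1|}+\cdots+\sqrt{|x_n|}\bigr)^{2M}.
\]

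Finally, summing over all $(m_1,\ldots,m_n)$ by grouping terms of fixed $M$, the hypotheses $\alpha(A)+\alpha(B)<2$ and $\beta(C_i)>1$ ensure that the polynomial-in-$M$ and polynomial-in-$m_i$ prefactors (together with the logarithmic factors absorbed into $S$) cannot destroy the geometric decay by $(\sqrt{|x_1|}+\cdots+\sqrt{|x_n|})^{2M}$ whenever $\sqrt{|x_1|}+\cdots+\sqrt{|x_n|}<1$. I expect the main technical hurdle to be precisely this squared-multinomial step: the direct identity $\sum\binom{M}{m_1,\ldots,m_n}|x_1|^{m_1}\cdots|x_n|^{m_n}=(|x_1|+\cdots+|x_n|)^{M}$ used in Theorem~\ref{t3.5} is no longer adequate, and passing to its squared form (equivalently, invoking Stirling together with the weighted AM-GM bound $\prod_i(\sqrt{|x_i|}/t_i)^{t_i}\le\sum_i\sqrt{|x_i|}$) is what injects the square roots into the final convergence condition.
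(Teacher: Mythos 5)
Your proposal is correct and follows exactly the route the paper indicates for this theorem (the paper omits the proof, referring to the scheme of Theorem~\ref{t3.5}): you reproduce the limit-formula and Schur-decomposition estimates and correctly identify the one genuinely new ingredient, namely that $(A)_M(B)_M$ yields $\bigl(M!/(m_1!\cdots m_n!)\bigr)^2$, which must be controlled by $\binom{2M}{2m_1,\ldots,2m_n}$ (equivalently $\prod_i\binom{2m_i}{m_i}\le\binom{2M}{M}$ via Vandermonde) to produce the region $\sqrt{|x_1|}+\cdots+\sqrt{|x_n|}<1$. If anything, your argument is slightly more complete than the paper's template, since you actually sum over fixed $M$ rather than only showing the general term tends to zero.
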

\begin{theorem}
Let $A$, $B_1, \dots, B_n, C \in \mathbb{C}^{r\times r}$ be positive stable matrices such that $\alpha(A) < \beta(C)$,  $\alpha(B_1) < 1, \, \dots, \, \alpha(B_n) < 1$.
 Then the series $F_{\mathcal{D}}$ defined in \eqref{2.4} converges absolutely for $\vert x_1\vert$, $\dots, \vert x_n\vert < 1$. 
\end{theorem}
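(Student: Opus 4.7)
The proof will follow the template of Theorem~\ref{t3.5}. Set $M = m_1 + \cdots + m_n$ and write the general term as
$$\mathcal{D}_{m_1,\dots,m_n}\,x_1^{m_1}\cdots x_n^{m_n} = (A)_M\,(B_1)_{m_1}\cdots(B_n)_{m_n}\,(C)^{-1}_M\,\frac{x_1^{m_1}\cdots x_n^{m_n}}{m_1!\cdots m_n!}.$$
First I would use the limit formula \eqref{eq10} to replace each Pochhammer symbol by its asymptotic surrogate: $(A)_M \sim \Gamma^{-1}(A)(M-1)!\,M^A$, $(B_i)_{m_i}\sim\Gamma^{-1}(B_i)(m_i-1)!\,m_i^{B_i}$, and $(C)^{-1}_M\sim\Gamma(C)\,((M-1)!)^{-1}\,M^{-C}$. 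The decisive structural observation is that the $(M-1)!$ arising from $(A)_M$ cancels against the $((M-1)!)^{-1}$ from $(C)^{-1}_M$, leaving no combinatorial coupling between the indices; consequently only the per-variable quotient $(m_i-1)!/m_i! = 1/m_i$ survives. This is precisely what distinguishes $F_{\mathcal{D}}$ from $F_{\mathcal{A}}$ and explains why the convergence polydisc is $|x_i|<1$ for each $i$ rather than the simplex $\sum|x_i|<1$.

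Next, I would apply the Schur decomposition bound \eqref{eq09} to the matrix powers, obtaining $\|M^A\|\le S_A\,M^{\alpha(A)}$, $\|M^{-C}\|\le S_C\,M^{-\beta(C)}$ and $\|m_i^{B_i}\|\le S_{B_i}\,m_i^{\alpha(B_i)}$, where each $S_\bullet$ is a polynomial of degree at most $r-1$ in $\ln M$ or $\ln m_i$. Collecting the constants into
$$N=\|\Gamma^{-1}(A)\|\,\|\Gamma(C)\|\prod_{i=1}^n\|\Gamma^{-1}(B_i)\|,$$
and the logarithmic pieces into a single factor $S$, I arrive at the majorization
$$\|\mathcal{D}_{m_1,\dots,m_n}\,x_1^{m_1}\cdots x_n^{m_n}\|\le N\,S\,M^{\alpha(A)-\beta(C)}\prod_{i=1}^{n}m_i^{\alpha(B_i)-1}\,|x_i|^{m_i},$$
valid for all $m_i\ge 1$ (the boundary terms where some $m_i=0$ are handled separately by using $(B_i)_0=I$ directly, and contribute a convergent lower-dimensional subseries of the same type).

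Finally, I would conclude convergence of the majorant series. Under the hypothesis $\alpha(A)<\beta(C)$, the factor $M^{\alpha(A)-\beta(C)}$ stays bounded (in fact decaying), so it may be absorbed into the constant. The remaining sum factorizes as
$$\prod_{i=1}^{n}\sum_{m_i\ge 1} m_i^{\alpha(B_i)-1}\,|x_i|^{m_i},$$
and each one-variable series converges by the ratio test whenever $|x_i|<1$, since the algebraic prefactor $m_i^{\alpha(B_i)-1}$ (and the harmless logarithmic factors from $S$) contribute ratio $1$ in the limit. Thus $F_{\mathcal{D}}$ converges absolutely on the polydisc $\{|x_i|<1:i=1,\dots,n\}$. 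The only mild obstacle is the bookkeeping around the cancellation of $(M-1)!$ and verifying that the polynomial-logarithmic Schur factors never spoil the geometric decay from $|x_i|^{m_i}$; both are routine once the pattern of Theorem~\ref{t3.5} is adopted, and the hypothesis $\alpha(B_i)<1$ appears naturally to neutralise the residual $m_i^{\alpha(B_i)-1}$ growth before the geometric factor takes over.
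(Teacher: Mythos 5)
Your proposal is correct and follows exactly the template the paper intends: the paper omits this proof, stating only that it is ``similar to Theorem~\ref{t3.5}'', and your argument reproduces that template --- the limit formula \eqref{eq10} to replace each Pochhammer symbol, the Schur bound \eqref{eq09} for the matrix powers, and the cancellation of the $(M-1)!$ factors from $(A)_M$ and $(C)_M^{-1}$ that turns the simplex of $F_{\mathcal{A}}$ into the polydisc of $F_{\mathcal{D}}$. If anything you go slightly further than the paper's model proof, which only argues that the general term tends to zero, whereas you actually dominate the series by a convergent (essentially factorized) majorant; your side remark that $\alpha(B_i)<1$ is needed to ``neutralise'' $m_i^{\alpha(B_i)-1}$ is the one inessential point, since on the open polydisc the geometric factor already does that, but this does not affect correctness.
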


\subsection{Integral representations}
\begin{theorem}
  Let $A, B_i, C_i$ be matrices in $\mathbb{C}^{r \times r}$ such that $B_iB_j = B_jB_i$, $C_iB_j = B_jC_i$, $C_iC_j = C_jC_i$ and $B_i$, $C_i$, $C_i-B_i$ are positive stable for each $i,j=1,\dots,n$. Then for $\vert x_1\vert \leq r_1, \dots, \vert x_n\vert \leq r_n$, $r_1+\cdots + r_n < 1$,  the series $F_{\mathcal{A}}$ defined in \eqref{eq2.1} can be put in the integral form as
	\begin{align}
	&F_{\mathcal{A}}[A, B_1, \dots, B_n; C_1, \dots, C_n; x_1, \dots, x_n]\nonumber\\
	& = \underbrace{\int_{0}^{1}\cdots \int_{0}^{1}}_{n-fold} (1-(u_1x_1+\cdots+u_nx_n))^{-A} \ \prod_{i=1}^{n} u_i^{B_i-I} (1-u_i)^{C_i-B_i-I} \nonumber\\
	& \quad \times  du_1\cdots du_n \ \Gamma \left(\begin{array}{c}
	C_1, \dots, C_n\\
	B_1, \dots, B_n, C_1-B_1, \dots, C_n-B_n
	\end{array}\right). 
	\end{align}
\end{theorem}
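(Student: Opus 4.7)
The plan is to start from the right-hand side and expand the integrand into a series whose termwise integration reproduces the defining series \eqref{eq2.1}. The only non-commutative obstruction is that $A$ need not commute with any of the $B_i, C_j$; on the other hand, the hypotheses $B_iB_j = B_jB_i$, $C_iB_j = B_jC_i$ and $C_iC_j=C_jC_i$ force the whole collection $\{B_i, C_i, C_i - B_i : 1\le i\le n\}$ to be a mutually commuting family, and this is what allows every scalar-indexed rearrangement and every application of the beta integral to go through. All matrix-valued gamma factors built from these matrices therefore commute with each other.

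First I would expand the factor $(1-(u_1x_1+\cdots+u_nx_n))^{-A}$ as the binomial matrix series $\sum_{m\ge 0}(A)_m\,(u_1x_1+\cdots+u_nx_n)^m/m!$, since $A$ alone appears in the exponent. Applying the scalar multinomial theorem to $(u_1x_1+\cdots+u_nx_n)^m$ and reindexing by $(m_1,\dots,m_n)$ gives
\begin{align*}
(1-(u_1x_1+\cdots+u_nx_n))^{-A} = \sum_{m_1,\dots,m_n=0}^{\infty}(A)_{m_1+\cdots+m_n}\,\prod_{i=1}^{n}\frac{(u_ix_i)^{m_i}}{m_i!}.
\end{align*}
Termwise integration, once justified, leaves inside the product over $i$ the matrix beta integral $\int_0^1 u_i^{B_i+m_iI-I}(1-u_i)^{C_i-B_i-I}\,du_i=\mathfrak{B}(B_i+m_iI,\,C_i-B_i)$, and because $B_i$ commutes with $C_i-B_i$ this equals $\Gamma(B_i+m_iI)\,\Gamma(C_i-B_i)\,\Gamma^{-1}(C_i+m_iI)$.

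Next I would multiply the resulting series on the right by the external gamma factor $\Gamma(C_1)\cdots\Gamma(C_n)\,\Gamma^{-1}(B_1)\cdots\Gamma^{-1}(B_n)\,\Gamma^{-1}(C_1-B_1)\cdots\Gamma^{-1}(C_n-B_n)$ and use the common commutativity of the $\Gamma$-factors with argument in $\{B_i,C_i,C_i-B_i,C_i+m_iI\}$ to cancel each $\Gamma(C_i-B_i)$ against $\Gamma^{-1}(C_i-B_i)$ and to regroup the remaining factors as $\prod_i\bigl[\Gamma^{-1}(B_i)\Gamma(B_i+m_iI)\bigr]\bigl[\Gamma(C_i)\Gamma^{-1}(C_i+m_iI)\bigr]$. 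By \eqref{c1eq.010} this product is exactly $\prod_i (B_i)_{m_i}(C_i)_{m_i}^{-1}$, which yields the series for $F_{\mathcal{A}}$ in \eqref{eq2.1}. These gamma-indexed rearrangements commute freely with $(A)_{m_1+\cdots+m_n}$, which can remain stationed on the left throughout, matching the ordering in the definition.

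The main obstacle, and the only step requiring care beyond algebra, is the justification of interchanging the infinite sum with the $n$-fold integral. For this I would note that on the polydisc $|x_i|\le r_i$ with $r_1+\cdots+r_n<1$ and $u_i\in[0,1]$ one has $|u_1x_1+\cdots+u_nx_n|\le r_1+\cdots+r_n<1$, so the binomial series for $(1-(u_1x_1+\cdots+u_nx_n))^{-A}$ converges in norm uniformly in $(u_1,\dots,u_n)\in[0,1]^n$, as does the multinomially expanded form after incorporating the integrable weights $u_i^{B_i-I}(1-u_i)^{C_i-B_i-I}$; positivity of $B_i$ and $C_i-B_i$ makes these weights integrable. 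Termwise integration is then legitimate by the dominated convergence theorem, and the convergence of the resulting series is guaranteed by Theorem~\ref{t3.5}.
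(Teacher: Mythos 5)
Your proof is correct and follows exactly the route the paper itself defers to (it omits the proof, citing the analogous argument for the Appell function $F_2$ in \cite{ds1}): binomial--multinomial expansion of $(1-(u_1x_1+\cdots+u_nx_n))^{-A}$, termwise integration via the matrix beta integral $\mathfrak{B}(B_i+m_iI,\,C_i-B_i)$, and cancellation of gamma factors using \eqref{c1eq.010} and the assumed commutativity of the $B_i,C_i$ family. One small wording caveat: the gamma factors do not ``commute freely with $(A)_{m_1+\cdots+m_n}$'' --- your argument works because that factor stays on the left and is never moved past them, which you do say.
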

\begin{theorem}\label{t2}
 Let $A_i, B_i, C$ be matrices in $\mathbb{C}^{r \times r}$ such that $B_iB_j = B_jB_i$, $CB_j = B_jC$ for each $i,j=1,\dots,n$ and $B_1, \dots, B_n$, $C$ and $C-(B_1+\dots+B_n)$ are positive stable. Then for $\vert x_1\vert, \dots, \vert x_n\vert <1$, the series $F_{\mathcal{B}}$ defined in \eqref{eq2.2}, can be put in the integral form as
	\begin{align}
	&F_{\mathcal{B}}[A_1, \dots, A_n, B_1, \dots, B_n; C; x_1, \dots, x_n]\nonumber\\
	& = \underbrace{\idotsint}_{n-fold} (1-u_1x_1)^{-A_1} \cdots (1-u_nx_n)^{-A_n} \, u_1^{B_1-I} \cdots u_n^{B_n-I} (1-(u_1+\cdots+u_n))^{C-(B_1+\cdots+B_n)-I}\nonumber\\
	&\quad \times du_1\cdots du_n \ \Gamma \left(\begin{array}{c}
	C\\
	B_1, \dots, B_n, C-(B_1+\cdots+B_n)
	\end{array}\right), \ u_1\geq 0, \dots, u_n\geq 0, \ u_1 + \cdots + u_n \leq 1.
	\end{align}
\end{theorem}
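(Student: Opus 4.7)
My plan is to start from the integral on the right-hand side and reduce it to the series defining $F_{\mathcal{B}}$, using a matrix Dirichlet-type integral as the main tool. First, for each $i$ I would expand the factor $(1-u_ix_i)^{-A_i}$ as its binomial matrix series
\begin{equation*}
(1-u_ix_i)^{-A_i}=\sum_{m_i=0}^{\infty}(A_i)_{m_i}\,\frac{(u_ix_i)^{m_i}}{m_i!},
\end{equation*}
which is valid on the compact region of integration since $|u_ix_i|<1$. Because the variables $u_i$ are independent and each series is in a single scalar $u_ix_i$, the product of $n$ such series can be arranged in any prescribed order of the $(A_i)_{m_i}$ factors, matching the ordering that appears in the definition \eqref{eq2.2} of $F_{\mathcal{B}}$.

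Next I would justify interchanging the $n$-fold sum with the $n$-fold integral over the simplex $\{u_i\ge 0,\ u_1+\cdots+u_n\le 1\}$ by absolute convergence: the bound \eqref{eq09} together with $\alpha(B_i)-1<\beta(C)-\alpha(B_1)-\cdots-\alpha(B_n)-1$ (which is implicit in the positive stability hypotheses on $B_i$, $C$, and $C-(B_1+\cdots+B_n)$) keeps each $u_i^{B_i-I}$ and $(1-\sum u_j)^{C-\sum B_j-I}$ integrable, while the $x_i$-series converges uniformly. After the interchange what remains inside is the multivariable matrix Dirichlet integral
\begin{equation*}
\underbrace{\idotsint}_{n\text{-fold}} u_1^{B_1+m_1I-I}\cdots u_n^{B_n+m_nI-I}\bigl(1-(u_1+\cdots+u_n)\bigr)^{C-(B_1+\cdots+B_n)-I}\,du_1\cdots du_n.
\end{equation*}

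The crux of the argument is to evaluate this integral as
\begin{equation*}
\Gamma(B_1+m_1I)\cdots\Gamma(B_n+m_nI)\,\Gamma\bigl(C-(B_1+\cdots+B_n)\bigr)\,\Gamma^{-1}\bigl(C+(m_1+\cdots+m_n)I\bigr).
\end{equation*}
I would obtain this by induction on $n$: perform the change of variables $u_n=(1-u_1-\cdots-u_{n-1})t$ with $t\in[0,1]$, so the inner $t$-integral becomes a beta matrix integral $\mathfrak B(B_n+m_nI,\,C-(B_1+\cdots+B_n)+(m_1+\cdots+m_{n-1})I)$, which factors into gamma matrices thanks to the hypotheses $B_iB_j=B_jB_i$ and $CB_j=B_jC$ (these make $B_n+m_nI$ and the remaining exponent commute, so that the beta–gamma identity applies). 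The residual $(n-1)$-fold integral has the same shape as the original, with $B_n$ absorbed into the last exponent, and the induction closes. This matrix Dirichlet evaluation is the step where the commutativity assumptions are essential and where I expect most of the bookkeeping to occur.

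Finally I would recombine: multiplying by the external factor $\Gamma\bigl({C\atop B_1,\dots,B_n,C-(B_1+\cdots+B_n)}\bigr)$ cancels $\Gamma(C-(B_1+\cdots+B_n))$ and replaces each $\Gamma(B_i+m_iI)$ by $(B_i)_{m_i}$ via \eqref{c1eq.010}, while $\Gamma(C)\Gamma^{-1}(C+(m_1+\cdots+m_n)I)$ becomes $(C)_{m_1+\cdots+m_n}^{-1}$. Putting everything back under the summation and comparing with \eqref{eq2.2} yields exactly $F_{\mathcal{B}}[A_1,\dots,A_n,B_1,\dots,B_n;C;x_1,\dots,x_n]$. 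The main obstacle is really the matrix Dirichlet step; once that is settled, the rest is the Pochhammer–gamma dictionary already recorded in Section~2.
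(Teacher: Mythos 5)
Your proposal is correct and matches the route the paper intends: the paper omits the proof but states that it parallels the Appell $F_3$ integral representation in \cite{ds1} and rests on the Dirichlet-type integral of Lemma~\ref{t1.1}, which is exactly your binomial-expansion, term-by-term integration, and simplex-integral evaluation. The only difference is that you re-derive the Dirichlet evaluation by induction, whereas the paper simply cites Lemma~\ref{t1.1} from \cite{ds4}.
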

\begin{theorem}
Let $C$ be a matrix in $\mathbb{C}^{r\times r}$ such that $CB_i = B_iC$, $AC = CA$, for each $i$ and let $A$, $C$ and $C-A$ be positive stable. Then for  $\vert x_1\vert, \dots, \vert x_n\vert < 1$, the series  $F_{\mathcal{D}}$ defined in \eqref{2.4} can be put in the integral form as  
	\begin{align}
	&F_{\mathcal{D}}[A,  B_1, \dots, B_n; C; x_1, \dots, x_n]\nonumber\\
	& = \Gamma \left(\begin{array}{c}
	C\\
	A, C-A
	\end{array}\right)\int_{0}^{1} u^{A-I} (1-u)^{C-A-I} (1-ux_1)^{-B_1} \cdots (1-ux_n)^{-B_n} du.
	\end{align}
\end{theorem}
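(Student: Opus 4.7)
The plan is to reverse-engineer the right-hand side by expanding the factors $(1-ux_i)^{-B_i}$ as matrix binomial series and then recognising a beta-type integral in $u$. Specifically, for $u\in[0,1]$ and $|x_i|<1$ the expansion
\[
(1-ux_i)^{-B_i} \;=\; \sum_{m_i=0}^{\infty} (B_i)_{m_i}\, \frac{(ux_i)^{m_i}}{m_i!}
\]
is valid, and since $u$ is a scalar it may be freely moved through matrix coefficients. Multiplying these $n$ expansions in the prescribed left-to-right order yields, after collecting powers of $u$, a sum whose $(m_1,\dots,m_n)$-th term is $(B_1)_{m_1}\cdots(B_n)_{m_n}\, u^{m_1+\cdots+m_n}\, x_1^{m_1}\cdots x_n^{m_n}/(m_1!\cdots m_n!)$.

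Next I would justify interchanging summation and integration. On the compact set $u\in[0,1]$, using the estimate \eqref{eq09} applied to each $(B_i)_{m_i}$ together with $|ux_i|\le|x_i|<1$, the multiple series converges uniformly in $u$; combined with the integrable weight $u^{A-I}(1-u)^{C-A-I}$ (which is integrable because $A$ and $C-A$ are positive stable, so that $\beta(A)>0$ and $\beta(C-A)>0$), Fubini allows termwise integration. This reduces the integral to
\[
\sum_{m_1,\dots,m_n=0}^{\infty} (B_1)_{m_1}\cdots(B_n)_{m_n}\, \frac{x_1^{m_1}\cdots x_n^{m_n}}{m_1!\cdots m_n!}\, \int_0^1 u^{A+(m_1+\cdots+m_n)I-I}(1-u)^{C-A-I}\,du.
\]

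The remaining $u$-integral is a beta matrix function $\mathfrak{B}(A+(m_1+\cdots+m_n)I,\,C-A)$. This is the step where the hypothesis $AC=CA$ does the real work: it forces $A+(m_1+\cdots+m_n)I$ and $C-A$ to commute, so that by the factorisation recalled at \eqref{c1eq11} one has
\[
\mathfrak{B}(A+(m_1+\cdots+m_n)I,\,C-A) \;=\; \Gamma(A+(m_1+\cdots+m_n)I)\,\Gamma(C-A)\,\Gamma^{-1}(C+(m_1+\cdots+m_n)I).
\]
Applying \eqref{c1eq.010} in the form $\Gamma(A+kI) = \Gamma(A)(A)_k$ and likewise for $C$, and then using $CB_j=B_jC$ (hence $(C)_{m_1+\cdots+m_n}^{-1}$ commutes past the $(B_j)_{m_j}$'s to its natural place), the prefactor $\Gamma(A)\,\Gamma(C-A)\,\Gamma^{-1}(C)$ can be pulled out and is exactly the reciprocal of $\Gamma\!\begin{pmatrix}C\\A,C-A\end{pmatrix}$. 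Matching the resulting series to \eqref{2.4} finishes the proof.

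The main obstacle is the bookkeeping of non-commutativity: the $B_i$ are not assumed to commute among themselves, so the order in which the $(B_i)_{m_i}$ appear must be tracked through the binomial expansions and preserved when comparing with \eqref{2.4}; likewise one must verify that the commuting pairs $\{A,C\}$ and $\{B_j,C\}$ supplied by the hypotheses are exactly what is needed to (i) factor the beta integral into gammas and (ii) move the scalar Pochhammer $(C)^{-1}_{m_1+\cdots+m_n}$ into the canonical position dictated by the definition of $F_{\mathcal{D}}$. Everything else is a routine computation once this ordering is handled.
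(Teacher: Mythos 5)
Your argument is correct and is essentially the proof the paper has in mind: the paper omits it, deferring to the Euler-integral derivation for the Appell function $F_1$ in \cite{ds1}, which is exactly the computation you give (write the Pochhammer ratio $(A)_{m_1+\cdots+m_n}(C)^{-1}_{m_1+\cdots+m_n}$ as the beta matrix integral $\mathfrak{B}(A+(m_1+\cdots+m_n)I,\,C-A)$ up to the constant $\Gamma(A)\Gamma(C-A)\Gamma^{-1}(C)$, and resum the $(B_i)_{m_i}$ series into the factors $(1-ux_i)^{-B_i}$). You merely run the identity from the integral side toward the series side instead of the reverse, and your accounting of where $AC=CA$ (to factor the beta integral into gamma functions) and $CB_i=B_iC$ (to slide $(C)^{-1}_{m_1+\cdots+m_n}$ to its position in \eqref{2.4}) are used is exactly right.
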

\begin{theorem}\label{t3}
 Let $A, B_i, C$ be matrices in $\mathbb{C}^{r \times r}$ such that $B_iB_j = B_jB_i$ and $CB_j = B_jC$, for each $i,j=1,\dots,n$ and let  $B_1, \dots, B_n$, $C$, $C-(B_1+\cdots+B_n)$ be positive stable. Then  for $\vert x_1\vert \leq r_1, \dots, \vert x_n\vert \leq r_n$, $r_1+\cdots + r_n < 1$, the series  $F_{\mathcal{D}}$ defined in \eqref{2.4} can be put in the integral form as
	\begin{align}
	&F_{\mathcal{D}}[A, B_1, \dots, B_n; C; x_1, \dots, x_n]\nonumber\\
	& = \underbrace{\int \cdots \int}_{n-fold} (1-(u_1x_1+\cdots+u_nx_n))^{-A} \, u_1^{B_1-I} \cdots u_n^{B_n-I} (1-(u_1+\cdots+u_n))^{C-(B_1+\cdots+B_n)-I}  \nonumber\\
	&\quad \times  du_1\cdots du_n \ \Gamma \left(\begin{array}{c}
	C\\
	B_1, \dots, B_n, C-(B_1+\cdots+B_n)
	\end{array}\right), \ u_1\geq 0, \dots, u_n\geq 0, \ u_1 + \cdots + u_n \leq 1.
	\end{align}
\end{theorem}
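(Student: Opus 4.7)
The plan is to mirror the strategy that presumably underlies Theorem~\ref{t2}: expand the single factor $(1-(u_1x_1+\cdots+u_nx_n))^{-A}$ as a power series in $x_1,\dots,x_n$ with $u_1,\dots,u_n$ as parameters, interchange summation and integration (justified by uniform convergence on the compact polydisc), and evaluate the resulting integrals over the simplex by an iterated Dirichlet-type beta matrix integral. The arithmetic structure will match the series \eqref{2.4} term by term once the matrix Pochhammer symbols are extracted via $(X)_k=\Gamma^{-1}(X)\,\Gamma(X+kI)$.

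First I would fix $(u_1,\dots,u_n)$ in the closed simplex $\Delta=\{u_i\ge0,\ u_1+\cdots+u_n\le 1\}$ and $(x_1,\dots,x_n)$ in the polydisc $|x_i|\le r_i$ with $r_1+\cdots+r_n<1$. Then $|u_1x_1+\cdots+u_nx_n|\le r_1+\cdots+r_n<1$, so the scalar binomial series $(1-z)^{-A}=\sum_{m\ge0}(A)_m z^m/m!$ applies, and the multinomial theorem gives
\begin{align*}
(1-(u_1x_1+\cdots+u_nx_n))^{-A}=\sum_{m_1,\dots,m_n=0}^{\infty}(A)_{m_1+\cdots+m_n}\,\frac{u_1^{m_1}\cdots u_n^{m_n}\,x_1^{m_1}\cdots x_n^{m_n}}{m_1!\cdots m_n!},
\end{align*}
the convergence being absolute and uniform in $u\in\Delta$. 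Substituting and integrating termwise reduces the claim to the evaluation of
\begin{align*}
I_{m_1,\dots,m_n}=\int_{\Delta}u_1^{B_1+m_1I-I}\cdots u_n^{B_n+m_nI-I}(1-u_1-\cdots-u_n)^{C-(B_1+\cdots+B_n)-I}\,du_1\cdots du_n,
\end{align*}
and I would then multiply by the external gamma quotient $\Gamma(C)\,\Gamma^{-1}(B_1)\cdots\Gamma^{-1}(B_n)\,\Gamma^{-1}(C-(B_1+\cdots+B_n))$ and apply the Pochhammer identity \eqref{c1eq.010} to recover the coefficient $(A)_{m_1+\cdots+m_n}(B_1)_{m_1}\cdots(B_n)_{m_n}(C)^{-1}_{m_1+\cdots+m_n}$ from \eqref{2.4}.

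The main obstacle is the evaluation of the simplex integral $I_{m_1,\dots,m_n}$ as a product of gamma matrix functions, namely
\begin{align*}
I_{m_1,\dots,m_n}=\Gamma(B_1+m_1I)\cdots\Gamma(B_n+m_nI)\,\Gamma(C-(B_1+\cdots+B_n))\,\Gamma^{-1}(C+(m_1+\cdots+m_n)I).
\end{align*}
This is precisely the place where the hypotheses $B_iB_j=B_jB_i$ and $CB_j=B_jC$, together with the positive stability of $B_i$ and of $C-(B_1+\cdots+B_n)$, must be used essentially. I would obtain it by iterated slicing of the simplex: set $v_i=u_i/(1-u_1-\cdots-u_{i-1})$ successively so that each slice becomes a one-dimensional beta matrix integral of the form \eqref{c1eq11}, and then invoke the commuting-factor identity $\mathfrak{B}(X,Y)=\Gamma(X)\Gamma(Y)\Gamma^{-1}(X+Y)$ at each stage. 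The commutativity hypotheses are what allow the resulting gamma factors to be rearranged so that telescoping collapses the intermediate products to the stated closed form; this bookkeeping is the only subtle step, the rest being the series manipulation already outlined.
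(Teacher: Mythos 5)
Your proposal is correct and takes essentially the same route as the paper's own (omitted) argument: the paper states that the proof mirrors the Appell $F_1$ integral representation and rests on Lemma~\ref{t1.1}, which is exactly the Dirichlet-type simplex integral you evaluate, followed by the same termwise multinomial expansion and Pochhammer extraction. The only cosmetic difference is that you re-derive that lemma by iterated slicing rather than citing it.
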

The proofs of these results are similar to the corresponding proofs of integral representations of  Appell matrix functions $F_2$, $F_3$ and $F_1$ respectively, \cite{ds1}. As such, the proofs are omitted. We remark that the following lemma is used in the proofs of Theorems~\ref{t2} and \ref{t3}
\begin{lemma}\cite{ds4}\label{t1.1}
Let $A_1, \dots, A_n$, $C$ be commuting matrices in $\mathbb {C}^{r \times r}$ such that $A_1, \dots, A_n$, $C, A_1 + \cdots + A_n + C$ are positive stable and $A_1 + \cdots + A_n + C + kI$ is invertible for all integers $k\geq 0$. Then
\begin{align}
\underbrace{\idotsint}_{\mbox{$n$-fold}}\, & u_1^{A_1-I} \cdots \ u_n^{A_n-I} \ (1-u_1 - \cdots - u_n)^{C-I} \, du_1 \cdots du_n = \Gamma \left(\begin{array}{c}
A_1, \dots, A_n, C\\
A_1 + \cdots + A_n + C   
\end{array}\right)\nonumber\\
&u_1\geq 0, \dots,  u_n\geq 0, u_1 + \cdots + u_n \leq 1.
\end{align}
\end{lemma}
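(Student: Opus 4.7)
The natural approach is induction on $n$, with the base case $n=1$ being the beta matrix function identity given in the preliminaries, namely $\int_0^1 u^{A-I}(1-u)^{C-I}\,du = \Gamma(A)\Gamma(C)\Gamma^{-1}(A+C)$, which holds because $A$ and $C$ commute and are positive stable. The target expression on the right of the lemma is designed so that this single‐variable identity nests inside itself $n$ times.

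For the inductive step, I would fix $u_1,\dots,u_{n-1}$ in the simplex $\{u_i\ge 0,\ u_1+\cdots+u_{n-1}\le 1\}$ and first carry out the integration in $u_n$ over $[0,\,1-u_1-\cdots-u_{n-1}]$. The substitution $u_n = (1-u_1-\cdots-u_{n-1})\,v$, $v\in[0,1]$, produces the factorization
\begin{align*}
\int_0^{1-u_1-\cdots-u_{n-1}} u_n^{A_n-I}(1-u_1-\cdots-u_n)^{C-I}\,du_n
 = (1-u_1-\cdots-u_{n-1})^{A_n+C-I}\int_0^1 v^{A_n-I}(1-v)^{C-I}\,dv,
\end{align*}
where pulling the scalar $(1-u_1-\cdots-u_{n-1})$ to the appropriate matrix power and then outside the $v$-integral is legal because all exponent matrices commute, so the matrix functional calculus behaves as expected. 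The inner $v$-integral is exactly $\Gamma(A_n)\Gamma(C)\Gamma^{-1}(A_n+C)$ by the base case.

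Substituting back, the $n$-fold integral reduces to an $(n-1)$-fold integral over the $(n-1)$-simplex with weight $(1-u_1-\cdots-u_{n-1})^{(A_n+C)-I}$ in place of the original $(1-\sum u_i)^{C-I}$. This is precisely the hypothesis of the lemma with $n-1$ variables and with $C$ replaced by $A_n+C$. By the inductive hypothesis, that integral equals
\begin{align*}
\Gamma\!\left(\begin{array}{c} A_1,\dots,A_{n-1},A_n+C\\ A_1+\cdots+A_{n-1}+A_n+C\end{array}\right).
\end{align*}
Multiplying by the factor $\Gamma(A_n)\Gamma(C)\Gamma^{-1}(A_n+C)$ produced above, commutativity of all the matrices lets $\Gamma(A_n+C)$ cancel against $\Gamma^{-1}(A_n+C)$, leaving $\Gamma(A_1)\cdots\Gamma(A_n)\Gamma(C)\Gamma^{-1}(A_1+\cdots+A_n+C)$, as required.

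The main obstacle is bookkeeping on the hypotheses rather than any hard analysis. I need to check that the inductive hypothesis remains applicable after replacing $C$ by $A_n+C$: commutativity is clearly preserved, but positive stability of $A_n+C$ and of $A_1+\cdots+A_{n-1}+(A_n+C)=A_1+\cdots+A_n+C$ and invertibility of $A_1+\cdots+A_n+C+kI$ for all $k\ge 0$ must be carried along. Since the matrices commute they are simultaneously triangularizable, so eigenvalues of sums are sums of eigenvalues, and positive stability of each summand gives positive stability of the sum, settling the first two; the invertibility condition is hypothesized directly. The other point requiring care is the use of Fubini to peel off the $u_n$-integration, which is justified by absolute convergence: the bound \eqref{eq09} applied to $u_i^{A_i-I}=e^{(A_i-I)\log u_i}$ and $(1-\sum u_i)^{C-I}$ controls the integrand by a product of scalar powers of the form $u_i^{\beta(A_i)-1}(1-\sum u_i)^{\beta(C)-1}$ up to logarithmic factors, and the corresponding scalar multi-beta integral is finite under the positive stability assumptions.
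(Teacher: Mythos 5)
Your proof is correct. Note that the paper itself states this lemma without proof, citing \cite{ds4}; your induction on $n$ --- peeling off the $u_n$-integration via the substitution $u_n=(1-u_1-\cdots-u_{n-1})v$, reducing to the beta matrix function $\mathfrak{B}(A_n,C)=\Gamma(A_n)\Gamma(C)\Gamma^{-1}(A_n+C)$, and absorbing $C\mapsto A_n+C$ into the inductive hypothesis --- is exactly the standard Dirichlet-integral argument used in that reference, and your bookkeeping on positive stability of $A_n+C$ (via simultaneous triangularization of the commuting family) and on absolute convergence near the boundary faces covers the only points where the matrix setting requires care beyond the scalar case.
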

We have not given the integral representation for generalized Lauricella matrix function $F_{\mathcal{C}}$  because the matrix function $F_{\mathcal{C}}$ does not give a simple form for integral in the arguments $x_1, \dots, x_n$.

\section{Lauricella matrix functions}
There are fourteen Lauricella matrix functions of three variables denoted by $F_1,  \dots,  F_{14}$. Out of these, $F_1$, $F_2$, $F_5$ and $F_9$ are particular cases of generalized Lauricella matrix functions $F_{\mathcal{A}}$, $F_{\mathcal{B}}$, $F_{\mathcal{C}}$ and $F_{\mathcal{D}}$ respectively for $n = 3$. 
We give below the definition of remaining ten Lauricella matrix functions, \emph{viz.}, $F_3$, $F_4$, $F_6$, $F_7$, $F_8$, $F_{10}, F_{11}, F_{12}$, $F_{13}, F_{14}$ and discuss their regions of convergence.
Let $A_i$, $B_i$ and $C_i$, $1\le i\le 3$, be matrices in $\mathbb{C}^{r \times r}$ such that each $C_i+kI$ is invertible for all integers $k \geq 0$. Then the Lauricella matrix functions are defined by
\begin{align}
&F_{3}(A_1, A_2, A_2, B_1, B_2, B_1; C_1, C_2, C_3; x, y, z) \nonumber\\
& =\sum_{m, n, p =0}^{\infty} (A_1)_{m} \, (A_2)_{n+p} \, (B_1)_{m+p} \, (B_2)_n \, (C_1)_m^{-1} \, (C_2)^{-1}_n \, (C_3)_p^{-1} \, \frac{x^m \, y^n \, z^p}{m! \, n! \, p!};\label{eq3.2}
\end{align}
\begin{align}
&F_{4}(A_1, A_1, A_1, B_1, B_2, B_2; C_1, C_2, C_3; x, y, z) \nonumber\\
& = \sum_{m, n, p =0}^{\infty} (A_1)_{m+n+p} \,  (B_1)_{m} \, (B_2)_{n+p} \, (C_1)_m^{-1} \, (C_2)^{-1}_n \, (C_3)_p^{-1} \, \frac{x^m \, y^n \, z^p}{m! \, n! \, p!};\label{eq3.3}
\end{align}
\begin{align}
&F_{6}(A_1, A_2, A_3, B_1, B_2, B_1; C_1, C_2, C_2; x, y, z) \nonumber\\
&  =\sum_{m, n, p =0}^{\infty} (A_1)_{m} \, (A_2)_{n} \, (A_3)_{p} \, (B_1)_{m+p} \, (B_2)_n \, (C_1)_m^{-1} \, (C_2)^{-1}_{n+p} \, \frac{x^m \, y^n \, z^p}{m! \, n! \, p!};\label{eq3.4}
\end{align}
\begin{align}
&F_{7}(A_1, A_2, A_2, B_1, B_2, B_3; C_1, C_1, C_1; x, y, z) \nonumber\\
& =\sum_{m, n, p =0}^{\infty} (A_1)_{m} \, (A_2)_{n+p} \, (B_1)_{m} \, (B_2)_n \, (B_3)_{p} \, (C_1)_{m+n+p}^{-1} \, \frac{x^m \, y^n \, z^p}{m! \, n! \, p!};\label{eq3.5}
\end{align}
\begin{align}
&F_{8}(A_1, A_1, A_1, B_1, B_2, B_3; C_1, C_2, C_2; x, y, z) \nonumber\\
& = \sum_{m, n, p =0}^{\infty} (A_1)_{m+n+p} \, (B_1)_{m} \, (B_2)_n \, (B_3)_p \, (C_1)_m^{-1} \, (C_2)^{-1}_{n+p} \, \frac{x^m \, y^n \, z^p}{m! \, n! \, p!};\label{eq3.6}
\end{align}
\begin{align}
&F_{10}(A_1, A_2, A_1, B_1, B_2, B_1; C_1, C_2, C_2; x, y, z) \nonumber\\
& = \sum_{m, n, p =0}^{\infty} (A_1)_{m+p} \, (A_2)_n \, (B_1)_{m+p} \, (B_2)_n  (C_1)_m^{-1} \, (C_2)^{-1}_{n+p} \, \frac{x^m \, y^n \, z^p}{m! \, n! \, p!};\label{eq3.7}
\end{align}
\begin{align}
&F_{11}(A_1, A_2, A_2, B_1, B_2, B_1; C_1, C_2, C_2; x, y, z) \nonumber\\
& = \sum_{m, n, p =0}^{\infty} (A_1)_{m} \, (A_2)_{n+p} \, (B_1)_{m+p} \, (B_2)_n  (C_1)_m^{-1} \, (C_2)^{-1}_{n+p} \, \frac{x^m \, y^n \, z^p}{m! \, n! \, p!};\label{eq3.8}
\end{align}
\begin{align}
&F_{12}(A_1, A_2, A_1, B_1, B_1, B_2; C_1, C_2, C_2; x, y, z) \nonumber\\
& = \sum_{m, n, p =0}^{\infty} (A_1)_{m+p} \, (A_2)_{n} \, (B_1)_{m+n} \, (B_2)_p \, (C_1)_m^{-1} \, (C_2)^{-1}_{n+p} \, \frac{x^m \, y^n \, z^p}{m! \, n! \, p!};\label{eq3.9}
\end{align}
\begin{align}
&F_{13}(A_1, A_2, A_2, B_1, B_2, B_1; C_1, C_1, C_1; x, y, z) \nonumber\\
& = \sum_{m, n, p =0}^{\infty} (A_1)_{m} \, (A_2)_{n+p} \, (B_1)_{m+p} \, (B_2)_n \, (C_1)^{-1}_{m+n+p} \, \frac{x^m \, y^n \, z^p}{m! \, n! \, p!};\label{eq3.10}
\end{align}
\begin{align}
&F_{14}(A_1, A_1, A_1, B_1, B_2, B_1; C_1, C_2, C_2; x, y, z) \nonumber\\
& = \sum_{m, n, p =0}^{\infty} (A_1)_{m+n+p} \, (B_1)_{m+p} \, (B_2)_n \, (C_1)_m^{-1} \, (C_2)^{-1}_{n+p} \, \frac{x^m \, y^n \, z^p}{m! \, n! \, p!}.\label{eq3.11}
\end{align}
The matrix analogues of the three Srivastava's triple hypergeometric functions,  \cite{hm84}, are given by
\begin{align}
&H_{\mathcal{A}}(A, B, B'; C, C'; x, y, z)  = \sum_{m, n, p =0}^{\infty} (A)_{m+p} \, (B)_{m+n} \, (B')_{n+p} \, (C)_m^{-1} \, (C')^{-1}_{n+p} \, \frac{x^m \, y^n \, z^p}{m! \, n! \, p!},\label{eq3.12}
\\[5pt]
&H_{\mathcal{B}}(A, B, B'; C, C', C''; x, y, z) \nonumber\\
& = \sum_{m, n, p =0}^{\infty} (A)_{m+p} \, (B)_{m+n} \, (B')_{n+p} \, (C)_m^{-1} \, (C')^{-1}_{n} \, (C'')_{p}^{-1} \, \frac{x^m \, y^n \, z^p}{m! \, n! \, p!},\label{eq3.13}
\\[5pt]
&H_{\mathcal{C}}(A, B, B'; C; x, y, z)  = \sum_{m, n, p =0}^{\infty} (A)_{m+p} \, (B)_{m+n} \, (B')_{n+p} \,  (C)^{-1}_{m+n+p} \, \frac{x^m \, y^n \, z^p}{m! \, n! \, p!},\label{eq3.14}
\end{align}
where $A$, $B$, $B'$, $C$, $C'$ and $C''$ are matrices in $\mathbb{C}^{r\times r}$ such that $C+kI$, $C'+kI$ and $C''+kI$ are invertible for all integers $k\geq 0$. 

We remark that  the regions of convergence of Lauricella matrix functions and Srivastava's triple hypergeometric matrix functions defined above remain the same as in \cite{ds4}, with proofs modified as illustrated in Theorem~\ref{t3.5}. As such, these results are omitted.
The following theorem gives the region of convergence of Lauricella matrix function $F_3$.
\begin{theorem}
Let $A_1$, $A_2$, $B_1$, $B_2$, $C_1$, $C_2$ and $C_3$ be positive stable matrices in $\mathbb{C}^{r \times r}$ such that $\alpha(A_1) < \beta(C_1), \ \alpha(A_2) < 1, \ \alpha(B_1) < 1, \ \alpha(B_2) < \beta(C_2), \  \beta(C_3) > 1$. Then the series $F_3$ defined in \eqref{eq3.2} converges absolutely for $\vert x\vert < r, \ \vert y \vert < s, \ \vert z\vert < t, \ (1-r)(1-s) = t$.
\end{theorem}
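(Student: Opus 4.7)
The plan is to mimic Theorem~\ref{t3.5}: bound the norm of the general term $T_{m,n,p}$ of \eqref{eq3.2} by a scalar majorant and show that the majorant converges in the stated region, via iterated use of the binomial identity $\sum_{k\ge 0}\binom{k+q-1}{k}u^k=(1-u)^{-q}$.

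First, apply $(A)_k=\Gamma^{-1}(A)\Gamma(A+kI)$ to each of the seven Pochhammer matrices in \eqref{eq3.2}, use the limit formula \eqref{eq10} to approximate $(A)_k$ by $\Gamma^{-1}(A)(k-1)!\,k^A$ for large $k$, and apply the Schur bound \eqref{eq09} to obtain $\Vert k^A\Vert\le k^{\alpha(A)}P(\ln k)$ and $\Vert k^{-A}\Vert\le k^{-\beta(A)}Q(\ln k)$. After cancelling common factorials, the general term satisfies
\begin{equation*}
\Vert T_{m,n,p}\Vert \le N\,L(m,n,p)\,m^{\alpha(A_1)-\beta(C_1)}\,n^{\alpha(B_2)-\beta(C_2)}\,p^{-\beta(C_3)}\,(n+p)^{\alpha(A_2)}\,(m+p)^{\alpha(B_1)}\,\frac{(n+p-1)!\,(m+p-1)!}{(p-1)!\,m!\,n!\,p!}\,|x|^m|y|^n|z|^p,
\end{equation*}
where $N$ collects the seven gamma-norm constants and $L(m,n,p)$ is a product of Schur logarithmic polynomials in $\ln m,\ln n,\ln p,\ln(m+p),\ln(n+p)$. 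By hypothesis the $m$- and $n$-exponents are strictly negative, and since $\alpha(A_2),\alpha(B_1)<1$ one can absorb the log factors by replacing $(n+p)^{\alpha(A_2)}L$ with $(n+p)^{\alpha_2'}$ and $(m+p)^{\alpha(B_1)}L$ with $(m+p)^{\alpha_1'}$ for some $\alpha_2'\in(\alpha(A_2),1)$, $\alpha_1'\in(\alpha(B_1),1)$.

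Next, use the algebraic identity $\frac{(n+p-1)!(m+p-1)!}{(p-1)!\,m!\,n!\,p!}=\frac{1}{p}\binom{n+p-1}{n}\binom{m+p-1}{m}$ and sum the majorant in the order $n,m,p$. Using $\sum_{k\ge 0}(k+q)\binom{k+q-1}{k}u^k=q(1-u)^{-q-1}$, which majorizes the $(k+q)^{\alpha'}$-weighted sums for $\alpha'<1$, the inner $n$- and $m$-sums are bounded by $Cp(1-|y|)^{-p-1}$ and $Cp(1-|x|)^{-p-1}$ respectively, leaving a $p$-series of the form
\begin{equation*}
\sum_{p} p^{1-\beta(C_3)}\,\tilde L(p)\left(\frac{|z|}{(1-|x|)(1-|y|)}\right)^p.
\end{equation*}
The hypothesis $|x|<r$, $|y|<s$, $|z|<t$ together with $(1-r)(1-s)=t$ yields $(1-|x|)(1-|y|)>(1-r)(1-s)=t>|z|$, so the common ratio is strictly less than $1$ and the $p$-series converges by the root test, swamping any polynomial or logarithmic factor.

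The main obstacle is the combinatorial bookkeeping in the first two steps: correctly tracking which Pochhammer contributes to which factorial and to which binomial coefficient, and absorbing the Schur logarithmic polynomials without spoiling the strict geometric decay exploited in the last step. The curve $(1-r)(1-s)=t$ is precisely what certifies the strict inequality between $|z|$ and $(1-|x|)(1-|y|)$ and hence the convergence of the $p$-series.
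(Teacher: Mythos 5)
Your proposal is correct, and up to the term bound it coincides with the paper's own argument: the same chain Pochhammer $\to$ limit formula \eqref{eq10} $\to$ Schur bound \eqref{eq09} produces exactly the majorant in \eqref{4.17}--\eqref{4.18} (your form with $(n+p-1)!(m+p-1)!/\bigl((p-1)!\,m!\,n!\,p!\bigr)$ and exponents $\alpha(A_2),\alpha(B_1)$ is the same as the paper's after shifting the factorials). Where you genuinely diverge is the final step. The paper stops at the term bound and concludes from ``$\Vert\mathcal{A}_{m,n,p}x^my^nz^p\Vert\to 0$'' that the series converges absolutely, which is not a valid inference and in particular never explains where the curve $(1-r)(1-s)=t$ comes from. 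You instead actually sum the majorant: the identity $\frac{(n+p-1)!(m+p-1)!}{(p-1)!\,m!\,n!\,p!}=\frac1p\binom{n+p-1}{n}\binom{m+p-1}{m}$ is right, the weighted binomial sum $\sum_k(k+q)\binom{k+q-1}{k}u^k=q(1-u)^{-q-1}$ correctly dominates the $(n+p)^{\alpha'}$- and $(m+p)^{\alpha'}$-weighted inner sums for $\alpha'<1$ (and $\alpha(A_2),\alpha(B_1)<1$ licenses the absorption of the logarithmic Schur polynomials), and the resulting $p$-series has ratio $|z|/\bigl((1-|x|)(1-|y|)\bigr)<t/t=1$ precisely because of the hypothesis $(1-r)(1-s)=t$. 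So your route buys a complete proof and an explanation of the shape of the convergence region, at the cost of the extra combinatorial bookkeeping; the paper's version is shorter but logically incomplete at exactly the point you fill in.
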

\begin{proof}
Let $\mathcal{A}_{m,n,p} x^{m} y^n z^p$ denote the general term of the series $F_{3}$. Then 
\begin{align}
&\Vert \mathcal{A}_{m,n,p} x^{m} y^n z^p\Vert \nonumber\\
& \le \left\Vert (A_1)_{m} \right\Vert  \left\Vert (A_2)_{n+p} \right\Vert  \Vert (B_1)_{m+p} \Vert  \left\Vert (B_2)_{n} \right\Vert \Vert (C_1)_{m}^{-1} \Vert \Vert(C_2)_{n}^{-1}\Vert  \Vert(C_3)_{p}^{-1}\Vert \left \vert  \frac{x^{m} y^n z^p}{m! n! p!}\right\vert\nonumber\\
& \le \left\Vert \frac{(A_1)_{m} \,m^{A_1}\, m^{-A_1} \,(m-1)!}{(m-1)!} \right\Vert  \left\Vert\frac{(A_2)_{n+p} \,(n+p)^{A_2} \,(n+p)^{-A_2} \,(n+p-1)!}{(n+p-1)!} \right\Vert\nonumber\\
&\quad \times   \left\Vert\frac{(B_1)_{m+p} \,(m+p)^{B_1} \,(m+p)^{-B_1} \,(m+p-1)!}{(m+p-1)!} \right\Vert  \left\Vert \frac{(B_2)_{n} \,n^{B_2}\, n^{-B_2}\, (n-1)!}{(n-1)!}  \right\Vert\nonumber\\
&\quad \times \left\Vert \frac{(C_1)^{-1}_{m} \,m^{C_1}\, m^{-C_1}\, (m-1)!}{(m-1)!}  \right\Vert \left\Vert \frac{(C_2)^{-1}_{n} \,n^{C_2}\, n^{-C_2}\, (n-1)!}{(n-1)!}  \right\Vert \left\Vert \frac{(C_3)_{p} \ p^{C_3}\, p^{-C_3}\, (p-1)!}{(p-1)!}  \right\Vert \nonumber\\
& \quad \times \frac{\vert x\vert^{m} \vert y\vert^n \vert z\vert^p}{m! n! p!}.\label{4.16} 
\end{align}
Using $\Gamma (A) = \lim_{n \to \infty} (n-1)! \, (A)_n^{-1} \, n^A$, \cite{jjc98a}, we get
\begin{align}
\Vert \mathcal{A}_{m,n,p} x^{m} y^n z^p\Vert & \le N \Vert m^{A_1}\Vert \Vert (n+p)^{A_2}\Vert \Vert (m+p)^{B_1} \Vert \Vert n^{B_2}\Vert \Vert m^{-C_1}\Vert \Vert n^{-C_2}\Vert \Vert p^{-C_3}\Vert\nonumber\\
& \quad \times  \frac{(n+p-1)! \, (m+p-1)!}{m! \,  n! \, p! \,  (p-1)!} \, \vert x\vert^m \, \vert y\vert^n \, \vert z\vert^p,\label{4.17}
\end{align}
where $ N = \Vert \Gamma^{-1}(A_1) \Vert \Vert \Gamma^{-1}(A_2) \Vert\Vert \Gamma^{-1}(B_1)\Vert \Vert \Gamma^{-1}(B_2)\Vert \Vert \Gamma(C_1) \Vert \Vert \Gamma(C_2) \Vert \Vert \Gamma(C_3) \Vert$. The Schur decomposition, \cite{gl,vl}, yields
\begin{align}
&\Vert m^{A_1}\Vert \Vert (n+p)^{A_2}\Vert \Vert (m+p)^{B_1} \Vert \Vert n^{B_2}\Vert \Vert m^{-C_1}\Vert \Vert n^{-C_2}\Vert \Vert p^{-C_3}\Vert\nonumber\\
& \le S \,m^{\alpha(A_1) - \beta(C_1)} \,n^{\alpha(B_2) - \beta(C_2)} \,p^{-\beta(C_3)}\, (m+p)^{\alpha(B_1)} \,(n+p)^{\alpha(A_2)},\label{4.18}
\end{align}
where 
\begin{align}
S & = \left(\sum_{j= 0}^{r-1} \frac{1}{j!} \left(\max\{\Vert A_1\Vert, \Vert C_1\Vert\} r^{1/2} \ln m\right)^j\right)^2 \left(\sum_{j=0}^{r-1} \frac{1}{j!} \left(\max\{\Vert B_2\Vert, \Vert C_2\Vert\} r^{1/2} \ln n\right)^j \right)^2 \nonumber\\
& \quad \times  \sum_{j= 0}^{r-1} \frac{1}{j!} \left(\Vert C_3\Vert r^{1/2} \ln p\right)^j \sum_{j= 0}^{r-1} \frac{1}{j!} \left(\Vert A_2\Vert r^{1/2} \ln (n+p)\right)^j \sum_{j= 0}^{r-1} \frac{1}{j!} \left(\Vert B_2\Vert r^{1/2} \ln (m+p)\right)^j.
\end{align}
Equations \eqref{4.17} and \eqref{4.18} gives
\begin{align}
\Vert  \mathcal{A}_{m,n,p} x^{m} y^n z^p\Vert & \le N \ S \  m^{\alpha(A_1) - \beta(C_1)} \,n^{\alpha(B_2) - \beta(C_2)} \,p^{1-\beta(C_3)}\, (m+p)^{\alpha(B_1) -1} \nonumber\\
& \quad \times (n+p)^{\alpha(A_2) - 1} \, \frac{(n+p)! \, (m+p)!}{m! \,  n! \, p! \,  p!} \, \vert x\vert^m \, \vert y\vert^n \, \vert z\vert^p. 
\end{align}
Hence, for  $\alpha(A_1) < \beta(C_1), \ \alpha(A_2) < 1, \ \alpha(B_1) < 1, \ \alpha(B_2) < \beta(C_2), \  \beta(C_3) > 1$ and $\vert x\vert < r, \ \vert y \vert < s, \ \vert z\vert < t, \ (1-r)(1-s) = t$, $\Vert \mathcal{A}_{m,n,p} x^m y^n z^p\Vert \rightarrow 0$ as $ m, n, p \rightarrow \infty$. Therefore the series $F_3$ converges absolutely.
\end{proof}
We now find the system of partial matrix differential equations of bilateral type obeyed by the Lauricella matrix function $F_3$.
\begin{theorem}\label{th5.2}
Let $A_1$, $A_2$, $B_1$, $B_2$, $C_1$, $C_2$, $C_3$ be matrices in $\mathbb{C}^{r \times r}$ such that $A_1A_2 = A_2A_1$, $B_1B_2 = B_2B_1$, $B_iC_j = C_jB_i$ and $C_iC_j = C_jC_i$, for each $i,j=1,2,3$. Then the system of partial matrix differential equations of bilateral type satisfied by the matrix function $F_3$ defined in \eqref{eq3.2} is
\begin{align}
&x(1-x)U_{xx} - xzU_{xz} + U_x(C_1 - (B_1+I)x) - A_1(xU_x + zU_z) - A_1UB_1 = 0,\label{4.21}
\\[5pt]
&y(1-y)U_{yy} - yzU_{yz} + U_y(C_2 - (B_2+I)y) - zU_zB_2 - yA_2U_y - A_2UB_2 = 0,\label{4.22}
\\[5pt]
&z(1-z)U_{zz} - (xyU_{xy} + yzU_{yz} + xzU_{xz}) + U_z(C_3-(B_1+I)z) -yU_yB_1\nonumber
\\[5pt]
&- A_2(xU_x + zU_z) - A_2UB_1 = 0.\label{4.23}
\end{align}
\end{theorem}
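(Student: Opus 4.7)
The plan is to imitate the proof of Theorem \ref{3.2.1} and verify each of the three equations \eqref{4.21}--\eqref{4.23} by direct substitution of the power series
\[
U=F_3=\sum_{m,n,p\ge 0}\mathcal{A}_{m,n,p}\,x^m y^n z^p,\qquad
\mathcal{A}_{m,n,p}=\tfrac{(A_1)_m(A_2)_{n+p}(B_1)_{m+p}(B_2)_n(C_1)_m^{-1}(C_2)_n^{-1}(C_3)_p^{-1}}{m!\,n!\,p!}.
\]
Each term on the left-hand side of \eqref{4.21}--\eqref{4.23} becomes a triple series which, after re-indexing, can be written as $\sum(\text{matrix coefficient})\,x^m y^n z^p$; the task is to show that the total matrix coefficient vanishes for every $(m,n,p)$. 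The essential algebraic tools are the Pochhammer identities
\[
(A)_{k+1}=(A+kI)(A)_k=(A)_k(A+kI),\qquad (C)_{k+1}^{-1}=(C+kI)^{-1}(C)_k^{-1},
\]
which convert the integer weights $m(m-1),\,mn,\,mp,\,n(n-1),\,np$ produced by the derivatives into the matrix factors $C_i-(B_i+I)x_i$, $A_j+kI$, etc.\ appearing on the right-hand sides of \eqref{4.21}--\eqref{4.23}. The commutativity hypotheses are used only to permute factors so that $A_1,A_2$ (arising from shifts of $A_1,A_2$) end up on the left of $\mathcal{A}_{m,n,p}$, the $B_i$ factors end up on the right, and each $(C_i+kI)^{-1}$ can be slid past the $B_j$'s.

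For equation \eqref{4.21} the operators affect only the indices $m$ and $p$, and the only Pochhammer symbol coupling them is $(B_1)_{m+p}$. Shifting $m\mapsto m+1$ in $x(1-x)U_{xx}$, shifting $(m,p)\mapsto(m+1,p+1)$ in $xzU_{xz}$, and shifting $p\mapsto p+1$ in $zU_z$ produces exactly the matrix pieces $U_x(C_1-(B_1+I)x)$, $A_1xU_x$, $A_1zU_z$ and $A_1UB_1$ with the correct signs. Equation \eqref{4.22} is the easiest: the $y$-derivatives interact only with $n$, and the term $zU_zB_2$ arises because shifting $n\mapsto n+1$ pulls out the factor $(A_2+(n+p)I)$, whose ``$+pI$'' piece is absorbed by a $p$-shift supplied jointly by $yzU_{yz}$.

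The main obstacle is equation \eqref{4.23}: the index $p$ now appears in three different Pochhammer symbols $(A_2)_{n+p}$, $(B_1)_{m+p}$ and $(C_3)_p^{-1}$, so a $p$-shift couples to both the $m$- and $n$-variables. The compensating terms $xyU_{xy}+yzU_{yz}+xzU_{xz}$ are precisely what is needed to generate and then cancel the unwanted cross pieces produced by $(A_2)_{n+p+1}$ and $(B_1)_{m+p+1}$, while $yU_yB_1$ soaks up the residual $B_1$-factor coming from $(B_1)_{m+p}$ after the $n$-index has been shifted. Once the triple series are aligned and the commutation rules $A_1A_2=A_2A_1$, $B_1B_2=B_2B_1$, $B_iC_j=C_jB_i$, $C_iC_j=C_jC_i$ are invoked to place $A_2$ on the left and $B_1$ on the right of $\mathcal{A}_{m,n,p}$, the verification reduces to routine but lengthy bookkeeping, exactly parallel to the proof of Theorem~\ref{3.2.1}.
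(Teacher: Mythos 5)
Your proposal follows the same route as the paper's own proof: substitute the power series $U=\sum U_{m,n,p}x^my^nz^p$, use the Pochhammer recurrences $(A)_{k+1}=(A+kI)(A)_k$, $(C)_{k+1}^{-1}=(C+kI)^{-1}(C)_k^{-1}$ together with the commutativity hypotheses to convert the shifted coefficients into the matrix factors appearing in \eqref{4.21}--\eqref{4.23}, and check the identity coefficientwise (the paper likewise carries out only \eqref{4.21} in detail and declares the other two equations analogous). One minor correction to your bookkeeping: $xzU_{xz}$ and $zU_z$ contribute the \emph{unshifted} series $\sum mp\,U_{m,n,p}x^my^nz^p$ and $\sum p\,U_{m,n,p}x^my^nz^p$, so the only re-indexing needed in \eqref{4.21} is $m\mapsto m+1$ coming from the $xU_{xx}$ term, after which $m(m+1)U_{m+1,n,p}=(A_1+mI)U_{m,n,p}(B_1+(m+p)I)\left(I-(C_1+mI)^{-1}C_1\right)$ yields all the remaining pieces.
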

\begin{proof}
Let $U = F_{3}(A_1, A_2, A_2, B_1, B_2, B_1; C_1, C_2, C_3; x, y, z) = \sum_{m,n, p=0}^{\infty} U_{m,n, p} \, x^m y^n z^p$. Since $U(x, y, z)$ converges absolutely for $\alpha(A_1) < \beta(C_1), \ \alpha(A_2) < 1, \ \alpha(B_1) < 1, \ \alpha(B_2) < \beta(C_2), \  \beta(C_3) > 1$ and $\vert x\vert < r, \ \vert y \vert < s, \ \vert z\vert < t, \ (1-r)(1-s) = t$, so it is termwise differentiable in this domain and 
\begin{align}
&U_x = \sum_{m=1,n, p=0}^{\infty} m\, U_{m,n,p} \, x^{m-1} y^n z^p,\  U_y = \sum_{m=0,n=1, p=0}^{\infty} n\, U_{m,n,p} \, x^m y^{n-1} z^p, \nonumber\\
& U_z = \sum_{m=0,n=0, p=1}^{\infty} p\, U_{m,n,p} \, x^m y^{n} z^{p-1}, \ U_{xx} = \sum_{m=2,n,p=0}^{\infty}m\, (m-1)\, U_{m,n,p} \, x^{m-2} y^n z^p,\nonumber\\
& U_{xy} = \sum_{m=1,n = 1,p=0}^{\infty} m\,n\, U_{m,n,p} \, x^{m-1} y^{n-1} z^{p}, \ U_{xz} = \sum_{m=1,n = 0,p=1}^{\infty} m\,p\, U_{m,n,p} \, x^{m-1} y^{n} z^{p-1},\nonumber\\
& U_{yy} = \sum_{m=0,n=2,p=0}^{\infty}n\, (n-1)\, U_{m,n,p} \, x^{m} y^{n-2} z^p, \ U_{yz} = \sum_{m=0,n=1,p=1}^{\infty}n\, p\, U_{m,n,p} \, x^{m} y^{n-1} z^{p-1},\nonumber\\
& U_{zz} = \sum_{m=0,n=0,p=2}^{\infty}p\, (p-1)\, U_{m,n,p} \, x^{m} y^{n} z^{p-2}.\label{e3.17}
\end{align}  
This gives  
\begin{align}
&x(1-x)U_{xx} - xzU_{xz}\nonumber\\
& = \sum_{m,n, p=0}^{\infty} m (m+1) U_{m+1, n, p}\, x^m y^n z^p - \sum_{m,n,p=0}^{\infty} m(m-1) U_{m,n,p}\, x^m y^n z^p\nonumber\\
& \quad - \sum_{m,n,p=0}^{\infty} m p \, U_{m,n,p}\, x^m y^n z^p\nonumber\\
& = \sum_{m,n, p=0}^{\infty} (A_1+mI) U_{m,n,p} (B_1+(m+p)I) x^m y^n z^p - \sum_{m,n,p=0}^{\infty} m(m-1) U_{m,n,p}\, x^m y^n z^p \nonumber\\
& \quad - \sum_{m,n,p=0}^{\infty} (A_1+mI) U_{m,n,p} (B_1+(m+p)I) (C_1+mI)^{-1} \, C_1 x^m y^n z^p\nonumber\\
&\quad  - \sum_{m,n,p=0}^{\infty} m p \,U_{m,n,p}\, x^m y^n z^p\nonumber\\
& = \sum_{m,n=0}^{\infty} (m+p) A_1 U_{m,n,p} \, x^m y^n z^p + \sum_{m,n,p=0}^{\infty} m \, U_{m,n,p}\, x^m y^n z^p (B_1+I) \nonumber\\
& \quad -  \sum_{m,n,p=0}^{\infty} (A_1+mI) U_{m,n,p} (B_1 +(m+p)I) (C_1+mI)^{-1} \, C_1 x^m y^n z^p \nonumber\\
& \quad  +  \sum_{m,n,p=0}^{\infty} A_1 U_{m,n,p} x^m y^n z^p B_1\nonumber\\
& =  - U_x(C_1 - (B_1+I)x) + A_1(xU_x + zU_z) + A_1UB_1,
\end{align}
completing the proof of Equation \eqref{4.21}. Similarly we are able to show that the  matrix function $F_3$ satisfies the bilateral type matrix differential equations \eqref{4.22} and \eqref{4.23}. 
\end{proof}
The 
bilateral type partial matrix differential equations obeyed by remaining Lauricella matrix functions and Srivastava's triple hypergeometric matrix functions are tabulated in Table~1. 

{\renewcommand\arraystretch{1.5}
	\begin{longtable}{|l|l|c|}
		\hline
		Functions & Systems of Matrix Differential equations & Conditions on Matrices\\
		\hline
$F_4$ & $\begin{array}{c}
x(1-x) U_{xx} -(xyU_{xy} + xzU_{xz}) - (yU_y + zU_z) B_1\\
 + U_x (C_1-(B_1+I)x) - xA_1 U_x - A_1 U B_1 = 0,\\
y(1-y) U_{yy} -(xyU_{xy} + xzU_{xz} + 2yz U_{yz} + z^2 U_{zz}) \\
 - xU_x B_2 + U_y (C_2-(B_2+I)y) - A_1 (yU_y + zU_z) \\
 -zU_z (B_2 + I)- A_1 U B_2 = 0,\\
z(1-z) U_{zz} -(xyU_{xy} + xzU_{xz} + 2yz U_{yz} + y^2 U_{yy}) \\
- xU_x B_2 + U_z (C_3-(B_2+I)z) - A_1 (yU_y + zU_z) \\
-yU_y (B_2 + I)- A_1 U B_2 = 0;
		\end{array}$ &  $\begin{array}{c}
	B_1B_2 = B_2B_1,\\
	 C_iC_j = C_jC_i,\\
	B_i C_j = C_j B_i 
		\end{array}$\\
		\hline
		$F_6$ & $\begin{array}{c}
		x(1-x) U_{xx} - xz U_{xz} + U_x (C_1 - (B_1 + I)x) \\
		- A_1(xU_x + zU_z) - A_1 U B_1 = 0,\\
y(1-y) U_{yy} + z U_{yz} + U_y (C_2 - (B_2 + I)y) \\
- yA_2 U_y - A_2 U B_2 = 0,\\
z(1-z) U_{zz} - xz U_{xz} + y U_{yz} + U_z (C_2 - (B_1 + I)z) \\
- A_3(xU_x + zU_z) - A_3 U B_1 = 0;  
		\end{array}$ &  $\begin{array}{c}
		A_i A_j = A_j A_i\\
		B_1B_2 = B_2B_1\\
		  C_iC_j = C_jC_i,\\
		 B_i C_j = C_j B_i 
		\end{array}$\\
		\hline
		$F_7$ & $\begin{array}{c}
		x(1-x)U_{xx} + y U_{xy} + z U_{xz} + U_x (C_1 - (B_1 + I)x)\\
		-  xA_1 U_x - A_1 U B_1 = 0,\\
			y(1-y)U_{yy} + x U_{xy} + z U_{yz} - yz U_{yz} - zU_z B_2 \\
			+ U_y (C_1 - (B_2 + I)y) -  yA_2 U_y - A_2 U B_2 = 0,\\
		z(1-z)U_{zz} + x U_{xz} + y U_{yz} - yz U_{yz} - yU_y B_3 \\
	+ U_z (C_1 - (B_3 + I)z) -  zA_2 U_z - A_2 U B_3 = 0;
		\end{array}$ &  $\begin{array}{c}
		A_1A_2 = A_2A_1,\\
	B_iB_j = B_jB_i,\\
	B_i C_1 = C_1 B_i 
		\end{array}$\\
		\hline
		$F_8$ & $\begin{array}{c}
		x(1-x)U_{xx} - xyU_{xy} - xz U_{xz} - x A_1U_x + \\
	U_x (C_1 - (B_1 + I)x)	- (yU_y + zU_z) B_1 - A_1UB_1 = 0,\\
		y(1-y)U_{yy} - xyU_{xy} - yz U_{yz} + zU_{yz}  - yA_1U_y\\
		 - (xU_x + zU_z) B_2 + U_y (C_2 - (B_2 + I)y) - A_1UB_2 = 0,\\	
	z(1-z)U_{zz} - xzU_{xz} - yz U_{yz} + yU_{yz}  - zA_1U_z\\
	 - (xU_x + yU_y) B_3 + U_z (C_2 - (B_3 + I)z) - A_1UB_3 = 0;
		\end{array}$ &  $\begin{array}{c}
		B_iB_j = B_jB_i,\\
		C_1C_2 = C_2C_1,\\
		B_i C_j = C_j B_i 
		\end{array}$\\
		\hline
		$F_{10}$ & $\begin{array}{c}
		x(1-x)U_{xx} - 2xzU_{xz} - z^2 U_{zz} - zU_z(B_1 + I)\\
	+ U_x (C_1 - (B_1 + I)x) - A_1(xU_x + zU_z)
	 - A_1 UB_1 = 0,\\
		y(1-y)U_{yy} + zU_{yz} + U_y (C_2 - (B_2 + I)y)\\
		-yA_2U_y - A_2 UB_2 = 0,\\
		z(1-z)U_{zz} - 2xzU_{xz} + y U_{yz} - x^2u_{xx} + U_z C_2 \\
 - (xU_x + zU_z)(B_1 + I) 
- A_1(xU_x + zU_z) - A_1 UB_1 = 0;	
	\end{array}$ &  $\begin{array}{c}
		A_1A_2 = A_2A_1,\\
		 B_1B_2 = B_2B_1,\\
		 C_1C_2 = C_2C_1,\\
		 B_i C_j = C_j B_i
		\end{array}$\\
		\hline
		$F_{11}$ & $\begin{array}{c}
		x(1-x)U_{xx} - xzU_{xz} + U_x (C_1 - (B_1 + I)x)\\
	 - A_1(xU_x + zU_z) - A_1 UB_1 = 0,\\
	 	y(1-y)U_{yy} - yzU_{yz} + z U_{yz}  - zU_zB_2\\
	 + U_y (C_2 - (B_2 + I)y) - yA_2U_y - A_2 UB_2 = 0,\\
	 	z(1-z)U_{zz} -  xyU_{xy} -  yzU_{yz}- xzU_{xz} + yU_{yz} - y U_y B_1\\
	 	 + U_z (C_2 - (B_1 + I)z) - A_2(xU_x + zU_z)
	 	  - A_2 UB_1 = 0;
		\end{array}$ &  $\begin{array}{c}
		A_1A_2 = A_2A_1,\\
	B_1B_2 = B_2B_1,\\
	C_1C_2 = C_2C_1,\\
	B_i C_j = C_j B_i
		\end{array}$\\
		\hline
		$F_{12}$ & $\begin{array}{c}
		x(1-x)U_{xx} -  xyU_{xy} -  yzU_{yz}- xzU_{xz} - z U_z B_1\\
	+ U_x (C_1 - (B_1 + I)x) - A_1(xU_x + yU_y)
	 - A_1 UB_1 = 0,\\
		y(1-y)U_{yy} -  xyU_{xy} +  zU_{yz} + U_y (C_2 - (B_1 + I)y)\\
		- A_2(xU_x + yU_y) - A_2 UB_1 = 0,\\
	z(1-z)U_{zz} -  xzU_{xz} +  yU_{yz} + U_z (C_2 - (B_2 + I)z)\\
		-xU_x B_2 - zA_1U_z - A_1 UB_2 = 0;
		\end{array}$ &  $\begin{array}{c}
		A_1A_2 = A_2A_1,\\
	B_1B_2 = B_2B_1,\\
	C_1C_2 = C_2C_1,\\
	B_i C_j = C_j B_i
		\end{array}$\\
		\hline
		$F_{13}$ & $\begin{array}{c}
		x(1-x)U_{xx} +  yU_{xy} + zU_{xz} - xzU_{xz}\\
	+ U_x (C_1 - (B_1 + I)x) - A_1(xU_x + zU_z)
	 - A_1 UB_1 = 0,\\
	y(1-y)U_{yy} +  xU_{xy} + zU_{yz} - yzU_{yz} - zU_z B_2\\
	+ U_y (C_1 - (B_2 + I)y) - yA_2U_y - A_2 UB_2 = 0,\\
	z(1-z)U_{zz} +  xU_{xz} +  yU_{yz} -  xyU_{xy} -  yzU_{yz}\\
	- xzU_{xz} + U_z (C_1 - (B_1 + I)z) -yU_y B_1\\ 
	- A_2(xU_x + zU_z) - A_2 UB_1 = 0;
		\end{array}$ &  $\begin{array}{c}
		A_1A_2 = A_2A_1,\\
	B_1B_2 = B_2B_1,\\
	B_i C_1 = C_1 B_i
		\end{array}$\\
		\hline
		$F_{14}$ & $\begin{array}{c}
		x(1-x)U_{xx} -  xyU_{xy} - yzU_{yz} - 2xzU_{xz} - z^2U_{zz}\\
	+ U_x (C_1 - (B_1 + I)x) - zU_z - (yU_y + zU_z)B_1 \\
	- A_1(xU_x + zU_z) - A_1 UB_1 = 0,\\
	y(1-y)U_{yy} -  xyU_{xy} + zU_{yz} - yzU_{yz} - zU_z B_2\\
	- xU_xB_2 + U_y (C_2 - (B_2 + I)y) - yA_1U_y
	 - A_1 UB_2 = 0,\\
	z(1-z)U_{zz} -2xzU_{xz} +  yU_{yz} -  xyU_{xy} -  yzU_{yz}\\
	+ U_z (C_2 - (B_1 + I)z) - x^2 U_{xx} - xU_x(B_1+I)\\
	 -yU_y B_1 - A_1(xU_x + zU_z)	- A_1 UB_1 = 0;
		\end{array}$ &  $\begin{array}{c}
	B_1B_2 = B_2B_1,\\
	C_1C_2 = C_2C_1,\\
	B_i C_j = C_j B_i
		\end{array}$\\
		\hline
		$H_{\mathcal{A}}$ & $\begin{array}{c}
	x(1-x)U_{xx} -  xyU_{xy} - yzU_{yz} - xzU_{xz} - zU_zB \\
	+ U_x (C - (B + I)x)  - A(yU_y + xU_x) - AUB = 0, \\
	y(1-y)U_{yy} -  xyU_{xy} -xz U_{xz} - yzU_{yz} + zU_{yz}  - xU_xB'\\
		 + U_y (C' - (B' + I)y) - (yU_y + zU_z)B
		  -  UBB' = 0,\\
		z(1-z)U_{zz} -xzU_{xz} -  xyU_{xy} -  yzU_{yz} + yU_{yz} - xU_{x} B'\\
		+ U_z (C' - (B' + I)z) - A(yU_y + zU_z)
			- A UB' = 0;
		\end{array}$ &  $\begin{array}{c}
		B, B', C, C'\\
		 \text{ are commuting }\\
		\end{array}$\\
		\hline
		$H_{\mathcal{B}}$ & $\begin{array}{c}
	x(1-x)U_{xx} -  xyU_{xy} - yzU_{yz} - xzU_{xz} - zU_zB \\
	+ U_x (C - (B + I)x)  - A(yU_y + xU_x) - AUB = 0, \\
	y(1-y)U_{yy} -  xyU_{xy} -xz U_{xz} - yzU_{yz} 	- xU_xB'\\
 + U_y (C' - (B' + I)y) - (yU_y + zU_z)B -  UBB' = 0,\\
	z(1-z)U_{zz} -xzU_{xz} -  xyU_{xy} -  yzU_{yz}  - xU_{x} B'\\
	+ U_z (C'' - (B' + I)z) - A(yU_y + zU_z) - A UB' = 0;
		\end{array}$ &  $\begin{array}{c}
		B, B', C, C', \text{ and }\\
		C'' \text{ are commuting }\\
		\end{array}$\\
		\hline
		$H_{\mathcal{C}}$ & $\begin{array}{c}
			x(1-x)U_{xx} -  xyU_{xy} - yzU_{yz} - xzU_{xz}\\
			 + yU_{xy}+ zU_{xz}	-zU_zB	+ U_x (C - (B + I)x) \\
			  - A(yU_y + xU_x)  - AUB = 0, \\
		y(1-y)U_{yy} -  xyU_{xy} -xz U_{xz} - yzU_{yz}\\
		 + xU_{xy}+ zU_{yz}	- xU_xB' + U_y (C - (B' + I)y) \\
		 - (yU_y + zU_z)B -  UBB' = 0,\\
		z(1-z)U_{zz} -xzU_{xz} -  xyU_{xy} -  yzU_{yz}\\
		 + xU_{xz}+ yU_{yz}	+ U_z (C - (B' + I)z) - xU_{x} B'\\
		  - A(yU_y + zU_z) - A UB' = 0.
		\end{array}$ &  $\begin{array}{c}
		B, B' \text{ and } C\\
		 \text{are commuting }\\
		\end{array}$\\
		\hline
		\caption{Systems of partial matrix differential equations of bilateral type satisfied by Lauricella and Srivastava matrix functions}
\end{longtable}}

\begin{example} {\rm We show here that the conditions on the matrices given in the third column of Table~1 are necessary for systems of matrix differential equations of bilateral type to hold. 
		
		Consider the differential equation of the bilateral type
		\begin{align}
		&x(1-x)U_{xx} - xzU_{xz} + U_x(C_1 - (B_1+I)x) - A_1(xU_x + zU_z) - A_1UB_1 = 0\label{e3.19}
		\end{align}
		satisfied by the Lauricella matrix function $F_3$. Let $U = F_{3}(A_1, A_2, A_2, B_1, B_2, B_1; C_1, C_2, C_3; x, y, z) = \sum_{m,n, p=0}^{\infty} U_{m,n, p} \, x^m y^n z^p$. Then, using the partial derivatives \eqref{e3.17}, we have
			\begin{align}
		&x(1-x)U_{xx} - xzU_{xz} + U_x C_1\nonumber\\
		& = \sum_{m,n, p=0}^{\infty} (m+1) U_{m+1,n, p} \, x^m y^n z^p (C_1 + mI) - \sum_{m,n, p=0}^{\infty} m(m-1) U_{m,n, p} \, x^m y^n z^p\nonumber\\
		& \quad - \sum_{m,n, p=0}^{\infty} m\,p\, U_{m,n, p} \, x^m y^n z^p
		\end{align}
		and 
		\begin{align}
		& x U_x (B_1+I) + A_1(xU_x + zU_z) + A_1UB_1\nonumber\\
		& = \sum_{m,n, p=0}^{\infty} m U_{m,n, p} \, x^m y^n z^p (B_1 + I) + \sum_{m,n, p=0}^{\infty} A_1 (m+p) U_{m,n, p} \, x^m y^n z^p\nonumber\\
		& \quad + \sum_{m,n, p=0}^{\infty} A_1 U_{m,n, p} \, x^m y^n z^p B_1.
		\end{align}
			In particular for $m=1$ and $n=p=0$, we get
\begin{align}
&x(1-x)U_{xx} - xzU_{xz} + U_x C_1 = A_1 (A_1 + I) B_1 (B_1 + I) C_1^{-1} x\label{e3.22}
\end{align}
and 
\begin{align}
& x U_x (B_1+I) + A_1(xU_x + zU_z) + A_1UB_1 = A_1 (A_1 + I) B_1 C_1^{-1} x (B_1 + I).\label{e3.23}
\end{align}	
From \eqref{e3.22} and \eqref{e3.23}, one can conclude that the equation \eqref{e3.19} will hold if
\begin{align}
A_1 (A_1 + I) B_1 (B_1 + I) C_1^{-1} x = A_1 (A_1 + I) B_1 C_1^{-1} x (B_1 + I).
\end{align}	
This gives $B_1 C_1 = C_1 B_1$. Similarly, at $m = 0, n = 1, p = 0$, we have 
\begin{align}
&x(1-x)U_{xx} - xzU_{xz} + U_x C_1 = A_1 A_2 B_1 B_2 C_1^{-1} C_2^{-1} C_1 y\label{e4.22}
\end{align}
and 
\begin{align}
& x U_x (B_1+I) + A_1(xU_x + zU_z) + A_1UB_1 = A_1 A_2 B_2 C_2^{-1}  yB_1,\label{e4.23}
\end{align}
which gives $B_1 B_2 = B_2 B_1$, $C_1 C_2 = C_2 C_1$ and $B_1 C_2 = C_2 B_1$. The remaining conditions on the matrices can be obtained by changing the particular values of $m$, $n$ and $p$. 
}
\end{example}
In Table~2, we give the integral representations of Lauricella matrix functions of three variables and Srivastava's triple hypergeometric matrix functions. 
\begin{longtable}{|l|l |c|}
	\hline
	Functions & Integral Representations & Conditions on Matrices\\
	\hline
	$F_6$ & $\begin{array}{c}\\
	\Gamma{\left(\begin{array}{c}
		C_1, C_2\\
		A_1, A_2, A_3, C_1 - A_1, C_2-A_2-A_3
		\end{array}\right)} \iiint u^{A_1-I}\\
	\times v^{A_2-I} w^{A_3-I} (1-u)^{C_1-A_1-I} (1-v-w)^{C_2-A_2-A_3-I}\\
	\times (1-vy)^{-B_2} (1-ux-wz)^{-B_1} \ du \, dv\, dw, \,	0\leq u \leq 1,\\
	\,  v \geq 0, \, w\geq 0, \ v+w \leq 1,  r + t < 1, \ \vert x\vert
	\leq r, \vert z\vert \leq t,\\
	\beta(A_1) >0, \beta(A_2)>0, \beta(A_3)>0, \beta(C_1)>0, \\
	\beta(C_2)>0, \beta(C_1-A_1)>0, \beta(C_2-A_2-A_3)>0.
	\\
	\end{array}$ & $\begin{array}{c}
	A_iA_j = A_jA_i,\\
B_iC_j = C_jB_i,\\
C_1C_2 = C_2C_1,\\
A_i C_j = C_j A_i
	\end{array} $  \\
	\hline
	$F_7$ & $\begin{array}{c}\\
	\iiint (1-ux)^{-A_1} (1-vy-wz)^{-A_2} u^{B_1-I} v^{B_2-I} w^{B_3-I}\\ 
	\times (1-u-v-w)^{C_1-B_1-B_2-B_3-I}  du \, dv \, dw\\
	\times 	\Gamma{\left(\begin{array}{c}
		C_1\\
		B_1, B_2, B_3, C_1 - B_1-B_2-B_3
		\end{array}\right)}, u\geq 0,\\
	\, v\geq 0, \, w\geq 0, \, u + v+ w \leq 1, \, s+t < 1, \, \vert y\vert \leq s,\\
	\vert z\vert \leq t,	\beta(B_1) >0, \beta(B_2)>0, \beta(B_3)>0, \beta(C_1)>0, \\
	\beta(C_1-B_1-B_2-B_3)>0;\\
	\end{array}$ & $\begin{array}{c}
	B_iB_j = B_jB_i,\\
	B_iC_1 = C_1B_i
	\end{array}$\\
	\hline
	$F_8$ & $\begin{array}{c}\\
	\iiint (1-ux-vy-wz)^{-A_1} u^{B_1-I} v^{B_2-I} w^{B_3 - I}\\
	\times (1-u)^{C_1 - B_1 - I} (1-v-w)^{C_2-B_2-B_3-I}  du\,dv \, dw,\\
	\times	\Gamma{\left(\begin{array}{c}
		C_1, C_2\\
		B_1, B_2, B_3, C_1 - B_1, C_2-B_2-B_3
		\end{array}\right)}, 0 \leq u \leq 1,\\
	v \geq 0, \ w \geq 0, \ v + w \leq 1, \ r + s + t < 1, \, \vert x\vert \leq r, \vert y\vert \leq s, \\ \vert z\vert \leq t, \beta(B_1) >0, \beta(B_2)>0, \beta(B_3)>0, \beta(C_1)>0,\\
	\beta(C_2)>0, \beta(C_1-B_1)>0, \beta(C_2-B_2-B_3)>0.
	\end{array}$ & $\begin{array}{c}
	B_iB_j = B_jB_i,\\
		C_1C_2 = C_2C_1,\\
	B_iC_j = C_jB_i
	\end{array}$\\
	\hline
	$F_{11}$ & $\begin{array}{c}\\
	\Gamma{\left(\begin{array}{c}
		C_1, C_2\\
		A_1, A_2, C_1 - A_1, C_2-A_2
		\end{array}\right)} \int_{0}^{1} \int_{0}^{1} u^{A_1-I} v^{A_2-I}\\
	 \times (1-u)^{C_1-A_1-I}(1-v)^{C_2-A_2-I}(1-ux-vz)^{-B_1}  \\
	\times (1-vy)^{-B_2} \,du \, dv, \ r+t < 1, \ \vert x\vert \leq r, \vert z\vert \leq t,\\
	\beta(A_1)>0, \beta(A_2)>0, \beta(C_1)>0, \beta(C_2)>0,\\
	\beta(C_1-A_1)>0, \beta(C_2-A_2)>0.
	\\ \end{array}$ & $\begin{array}{c}
	A_1A_2 = A_2A_1,\\
	B_iC_j = C_jB_i,\\
	C_1C_2 = C_2C_1,\\
	A_i C_j = C_j A_i
	\end{array} $ \\
	\hline
	$F_{12}$ & $\begin{array}{c}\\
	\iiint (1-vy)^{A_1} (1-ux-vy-wz+vwyz)^{-A_1} u^{B_1-I} v^{A_2-I} \\
	\times w^{B_2-I} (1-u)^{C_1-B_1-I}   (1-v-w)^{C_2-A_2-B_2-I} (1-vy)^{-B_1} \\
	\times  \, du\,  dv\, dw 	\Gamma{\left(\begin{array}{c}
		C_1, C_2\\
		A_2, B_1, B_2,  C_1 - B_1, C_2-A_2-B_2
		\end{array}\right)},\\
	0 \leq u \leq 1, v \geq 0, w \geq 0, v+w \leq 1, 	r+s+t < 1+st, \\ 
	\vert x\vert \leq r, \vert y\vert \leq s, \vert z\vert \leq t, \beta(A_2) >0,  \beta(B_1)>0,\\
	\beta(B_2)>0, \beta(C_1)>0, \beta(C_2) >0,  \beta(C_1-B_1)>0, \\
	\beta(C_2- A_2-B_2)>0.
	\\ \end{array}$ &  $\begin{array}{c}
	C_1, C_2, B_1, B_2 \text{ and } \\
A_2 \,	\text { are commuting }
	\end{array}$\\
	\hline
	$F_{13}$ & $\begin{array}{c}\\
	\iint (1-ux)^{-A_1} (1-vy - uz)^{-A_2}  u^{B_1-I} v^{B_2-I}\\
	\times (1-u-v)^{C_1-B_1-B_2-I} \, du \, dv\\
	\Gamma{\left(\begin{array}{c}
		C_1\\
		B_1, B_2, C_1 - B_1-B_2
		\end{array}\right)},  u\geq 0, \, v\geq 0, \\
	u + v \leq 1, \ s + t < 1, \ \vert y\vert \leq s, \vert z\vert \leq t, 	\beta(B_1) >0,\\
	\beta(B_2)>0, \beta(C_1)>0, \beta(C_1-B_1-B_2)>0.
	\\ \end{array}$ & $\begin{array}{c}
	C_1, B_1 \text{ and } B_2\\
	\text { are commuting }
	\end{array} $\\
	\hline
	$H_{\mathcal{A}}$ & $\begin{array}{c}\\
	\int_{0}^{1} \int_{0}^{1} (1-ux-vy-vz+v^2yz)^{-A} (1-vy)^A u^{B-I}  \\
	\times v^{B'-I} (1-u)^{C-B-I} (1-v)^{C'-B'-I} (1-vy)^{-B}\\
	du \, dv \Gamma{\left(\begin{array}{c}
		C, C'\\
		B, B', C - B, C'-B'
		\end{array}\right)}, 	r+s+t < 1+st,\\
	\ \vert x\vert \leq r,  \vert y\vert \leq s, \vert z\vert \leq t, 	\beta(B) >0, \beta(B')>0, \\
	\beta(C)>0, \beta(C')>0, \beta(C-B)>0, \beta(C'-B')>0.
	\\ \end{array}$ & $\begin{array}{c}
	B, B', C \text{ and } C'\\
	\text { are commuting }
	\end{array} $\\
	\hline
	$H_{\mathcal{B}}$ & $\begin{array}{c}\\
	\Gamma^{-1}(A) \Gamma^{-1} (B) \Gamma^{-1}(B') \int_{0}^{\infty}\int_{0}^{\infty} \int_{0}^{\infty} e^{-(u+v+w)} v^{A-I}\\ 
	\times	u^{B-I} w^{B'-I} {}_0F_1(-;C;xuv) {} \ _0F_1(-;C';yuw) {} \\
	_0F_1(-;C'';zvw) \ du \ dv \ dw, 	\vert x\vert \leq r,  \vert y\vert \leq s,  \vert z\vert \leq t,\\
	\max\{r, s, t\}<1, \beta(A)>0, \beta(B)>0, \beta(B')>0.\\
	\end{array}$ & $\begin{array}{c}
	B, B' \text{ commutes }\\
	\text { with } A
	\end{array} $\\
	\hline
	$H_{\mathcal{C}}$ & $\begin{array}{c}\\
	\Gamma{\left(\begin{array}{c}
		C\\
		A,	B, C-A-B
		\end{array}\right)}\int_{0}^{1} \int_{0}^{1} u^{A-I} v^{B-I} \\
	(1-u)^{C-A-I} (1-v)^{C-A-B-I} (1-ux)^{-B} (1-ux)^{B'}\\
	(1-ux-vy-wz+uvy-zxu^2)^{-B'} du \, dv,\\
	\ r+s+t+rt < 1+s, \ \vert x\vert \leq r, \vert y\vert \leq s, \vert z\vert \leq t,\\
	\beta(A)>0, \beta(B)>0, \beta(C)>0, \beta(C-A-B)>0. 
	\\ \end{array}$ & $\begin{array}{c}
	A,	B, C \text{  commutes }\\
	\text { and } B'C = CB'
	\end{array} $\\
	\hline
	\caption{Integral Representations of Lauricella and Srivastava Matrix Functions}
\end{longtable}

%


\begin{thebibliography}{99}
\bibitem{ma} M.\,Abdalla, \textit{Special matrix functions: characteristics, achievements and future directions},  Linear Multilinear Algebra (2020), no. 1, 1--28. 

\bibitem{al} A.\,Altin, B.\,Cekim, R.\,Sahin, \textit{On the matrix versions of Appell hypergeometric functions}. Quaest. Math. 37 (2014), no. 1, 31--38.


\bibitem{hb} H.\,Buchholz, \textit{The confluent hypergeometric function}, Springer-Verlag, New York, 1969.

	
\bibitem{ds57} N.\,Dunford, J.\,Schwartz, \textit{Linear Operators}, Part-I, New York: Addison-Wesley, 1957.	
		
\bibitem{ds1} R.\,Dwivedi, V.\,Sahai, \textit{On the hypergeometric matrix functions of two variables.} Linear Multilinear Algebra 66 (2018), no. 9, 1819--1837.

\bibitem{ds4} R.\,Dwivedi, V.\,Sahai, \textit{On the hypergeometric matrix functions of several variables,} J. Math. Phys. 59 (2018), no. 2, 023505, 15pp. 

\bibitem{ds5} R.\,Dwivedi, V.\,Sahai, {A note on the Appell matrix functions}. Quaest. Math. 43 (2020), no. 3, 321--334.



\bibitem{ex} H.\, Exton, \textit{On certain confluent hypergeometric functions of three variables}, Ganita 21 (1970), no. 2, 79--92.

			
\bibitem{gl} G.H.\,Golub, C.F.\,van Loan, \textit{Matrix Computations}, Johns Hopkins Univ. Press, Baltimore, MD, 1989.
	
\bibitem{hl04} G.D.\,Hu, M.\,Liu, \textit{The weighted logarithmic matrix norm and bounds of the matrix exponential.}, Linear Algebra Appl. 2004;390:145--154.
%
	
\bibitem{jjc98a} L.\,Jodar, J.C.\,Cortes, \textit{Some properties of gamma and beta matrix functions.} Appl. Math. Lett. 11 (1998), no. 1, 89--93.
	
\bibitem{jjc98b}  L.\,Jodar, J.C.\,Cortes, \textit{On the hypergeometric matrix function.} Proceedings of the VIIIth Symposium on Orthogonal Polynomials and Their Applications (Seville, 1997). J. Comput. Appl. Math. 99 (1998), no. 1-2, 205--217.

\bibitem{jc00}  L.\,Jodar, J.C.\,Cortes, \textit{Closed form general solution of the hypergeometric matrix differential equation}. Mathematical and computer modelling in engineering sciences. Math. Comput. Modelling 32 (2000), no. 9, 1017--1028. 


\bibitem{m68} W. Miller, \textit{Lie Theory and Special Functions}, Academic Press, New York, 1968.

%
%
%

%
\bibitem{hm84}  H.\,M.\,Srivastava, H.\,L.\,Manocha, \textit{A Treatise on Generating Functions}, Ellis Horwood Series: Mathematics and its Applications. Ellis Horwood Ltd., Chichester; Halsted Press [John Wiley \& Sons, Inc.], New York, 1984.	
	
\bibitem{te05} L.N.\,Trefethen, M.\,Embree, \textit{Spectra and pseudospectra: the behaviour of nonnormal matrices and operators.} Princeton, NJ: Princeton University press; 2005. 	
	
\bibitem{vl} C.\,Van Loan, \textit{The sensitivity of the matrix exponential}, SIAM J. Numer. Anal. 14 (1977), no. 6, 971--981.	
\end{thebibliography}
\end{document}